\newcommand{\R}{\mathbb{R}}
\newcommand{\IR}{\mathbb{IR}}
\newcommand{\inum}[1]{\mathbf{#1}}
\newtheorem{algorithm}{Algorithm}
\journal{Applied Mathematics and Computation}
\begin{document}
\begin{frontmatter}

\title{New parameterized solution
with application to bounding secondary variables in FE models of
structures}

\author[ep]{Evgenija D.\ Popova}
\ead{epopova@math.bas.bg}

\address[ep]{Institute of Mathematics and Informatics, Bulgarian Academy of Sciences \\ Acad. G.~Bonchev Str., block 8, 1113 Sofia, Bulgaria.}

\begin{abstract}
In this work we propose a new kind of parameterized outer estimate
of the united solution set to an interval parametric linear system.
The new method has several advantages compared to the methods
obtaining parameterized solutions considered so far. Some properties
of the new parameterized solution, compared to the parameterized
solution considered so far, and a new application direction are
presented and demonstrated by numerical examples. The new
parameterized solution is a basis of a new approach for obtaining
sharp bounds for derived quantities (e.g., forces or stresses) which
are functions of the displacements (primary variables) in interval
finite element models (IFEM) of mechanical structures.
\end{abstract}

\begin{keyword}
linear algebraic equations \sep interval parameters \sep
 solution set \sep parameterized outer estimate \sep secondary variables.
\MSC 65G40 \sep 15A06
\end{keyword}
\end{frontmatter}

\newtheorem{theorem}{Theorem}
\newtheorem{proposition}{Proposition}
\newtheorem{lemma}{Lemma}
\newtheorem{corollary}{Corollary}
\newtheorem{alg}{Algorithm}
\newtheorem{definition}{Definition}
\newtheorem{example}{Example}
\newtheorem{remark}{Remark}

%%%%%%%%%%%%%%%%%%%%%%%%%%%%%%%%%%%%%%%%%%%%%%%%%%%%%%%%%%%%%%%
\section{Introduction}

Denote by $\R^{m\times n}$  the set of real $m\times n$ matrices.
Vectors are considered as one-column matrices. A real compact
interval is $\inum{a} = [a^-, a^+] := \{a\in\R \mid a^-\leq a\leq
a^+\}$ and $\IR^{m\times n}$ denotes the set of interval $m\times n$
matrices. We consider systems of linear algebraic equations having
affine-linear uncertainty structure
\begin{equation} \label{pls}
\begin{split}
&A(p) x \; = \; a(p), \quad p\in\inum{p}\in\IR^K,\\ &A(p) := A_0 +
\sum_{k = 1}^K p_kA_k, \qquad  a(p)  := a_0 + \sum_{k = 1}^K p_ka_k,
\end{split}
\end{equation}
where $A_k\in\R^{n\times n}$, $a_k\in\R^n$, $k = 0,\ldots, K$ and
the parameters $p=(p_1,\ldots$, $p_K)^\top$ are considered to be
uncertain and varying within given non-degenerate\footnote{An
interval $\inum{a}=[a^-, a^+]$ is degenerate if $a^-=a^+$.}
intervals $\inum{p}=(\inum{p}_1,\ldots,\inum{p}_K)^\top$. The
so-called united parametric  solution set of the system (\ref{pls})
is defined by
$$\Sigma^p_{\rm uni}=\Sigma_{\rm uni}(A(p),a(p),\inum{p}) :=
\{x\in\R^n \mid (\exists p\in\inum{p})(A(p)x=a(p))\}.$$

Usually, the interval methods (designed to provide interval
enclosure of $\Sigma^p_{\rm uni}$) generate numerical interval
vectors that contain the solution set. A new type of solution,
$x(p,l)$, called parameterized or p-solution, providing outer
estimate of the united parametric solution set is proposed in
\cite{Kol2014}. This solution is in form of an affine-linear
function of interval-valued parameters
$$x(p,l) = \tilde{x} + Vp + l, \quad p\in\inum{p}, \; l\in\inum{l},$$
where $\tilde{x}\in\R^n$, $V\in\R^{n\times K}$ and $\inum{l}$ is an
$n$-dimensional interval vector. The parameterized solution has the
property $\Sigma^p_{uni}\subseteq x(\inum{p},\inum{l})$, where
$x(\inum{p},\inum{l})$ is the interval hull of $x(p,l)$ over
$p\in\inum{p}$, $l\in\inum{l}$. For a nonempty and bounded set
$\Sigma\subset\R^n$, its interval hull $\square\Sigma$ is defined by
$$\square\Sigma := \bigcap \{\inum{x}\in\IR^n \mid \Sigma\subseteq \inum{x}\}.$$
Since $x(p,l)$ is a linear function of interval parameters,
$$
x(\inum{p},\inum{l}) = \square \{x(p,l) \mid p\in\inum{p},
l\in\inum{l}\} = \tilde{x}+ V\inum{p} + \inum{l}.
$$

Parameterized forms of solution enclosures are proposed in relation
to different numerical methods, the latter yielding interval boxes
(vectors) containing the solution set, see, e.g.,
\cite{Kol2014}--\cite{SkalnaHla19} and the references in
\cite{SkalnaHla19} which mentions most of the works on parameterized
solution enclosures. Parameterized enclosure of parametric
$AE$-solution sets is developed in \cite{PopovaED:2017:POE}.  The
potential of the parameterized solution for solving some global
optimization problems where the parametric linear system (\ref{pls})
is involved as equality constraint is shown in \cite{Kol2016b}. All
parameterized solutions considered so far are functions of the
initial parameters $p$ of the system and of $n$  additional interval
parameters $l$, where $n$ is the dimension of the system. Therefore,
and in order to distinguish the newly proposed parameterized
solution, we will call all parameterized solutions considered so far
Kolev-style parameterized solutions, shortly $p,l$-solutions instead
of $p$-solutions.

In this work we propose a new parameterized outer estimate of the
united solution set to an interval parametric linear system. Basing
on a recently proposed framework for interval enclosure of the
united parametric solution set, which has a broader scope of
applicability \cite{PopovaED:2018}, the new parameterized method
has, respectively, a broader scope of applicability than most of the
methods obtaining parameterized solutions considered so far. For
parametric systems involving rank one uncertainty structure, the new
parameterized solution depends only on the initial parameters of the
system.

The structure of the paper is as follows. Section \ref{prelim}
introduces notation and known results about the parameterized
$x(p,l)$ solution. The new parameterized solution and its interval
enclosure property are derived in Section \ref{pNew}. Some geometric
properties of the parameterized solutions and theoretical comparison
between the two kinds of parameterized solutions are presented in
Section \ref{properties} along with numerical illustrative examples.
Section \ref{appl} presents a new application direction for the
newly proposed parameterized solution
--- a new simpler approach providing sharp bounds for derived variables in interval finite
element models (IFEM) of mechanical structures. The new
parameterized approach is illustrated by some example problems,
which demonstrate its ability to deliver sharp bounds to derived
variables with the same quality as those of the primary variables
with less effort. In these examples we compare the interval
enclosures obtained by the two kinds of parameterized solutions and
by the direct interval
approach, as well as by other approaches considered so far. %
The paper ends by some conclusions.

%%%%%%%%%%%%%%%%%%%%%%%%%%%%%%%%%%%%%%%%%%%%%%%%%%%%%%%%%%%%%%%
\section{Preliminaries} \label{prelim}

For $\inum{a} = [a^-, a^+]$, define its mid-point $\check{a}:= (a^-
+ a^+)/2$,  the radius $\hat{a} := (a^+ - a^-)/2$ and the magnitude
$|\inum{a}|:= \max\{|a^-|, |a^+|\}$. These functions are applied to
interval vectors and matrices componentwise. Inequalities are
understood componentwise. The spectral radius of a matrix
$A\in\R^{n\times n}$ is denoted by $\varrho(A)$. The identity matrix
of appropriate dimension is denoted by $I$. For $A_k\in\R^{n\times
m}$, $1\leq k\leq t$, $\left(A_1,\ldots,A_t\right)\in\R^{n\times
tm}$ denotes the matrix obtained by stacking the columns of the
matrices $A_k$. Denote the $i$-th column of $A\in\R^{n\times m}$ by
$A_{\bullet i}$ and its $i$-th row by $A_{i\bullet}$.

\begin{theorem}[{\cite{Rohn2011v22}, Theorem 4.4}] \label{RohnThm}
Let $\inum{A}=[I-\Delta, I+\Delta]\in\IR^{n\times n}$ with
$\Delta\in\R^{n\times n}$, $\varrho(\Delta)<1$. Then the inverse
interval matrix
$$\inum{A}^{-1}:= [\min\{A^{-1}\mid A\in\inum{A}\}, \max\{A^{-1}\mid A\in\inum{A}\}] = [\underline{H}, \overline{H}]$$
 is given by
\begin{eqnarray*}
\overline{H} = (\overline{h}_{ij}) &=& (I-\Delta)^{-1}, \\
\underline{H} = (\underline{h}_{ij}), & \; &
\underline{h}_{ij}=\begin{cases}
-\overline{h}_{ij} & \text{ if } i\neq j \\
\frac{\overline{h}_{jj}}{2\overline{h}_{jj}-1} & \text{ if }
i=j.\end{cases}
\end{eqnarray*}
\end{theorem}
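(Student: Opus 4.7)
The plan is to treat the upper bound, the off-diagonal part of the lower bound, and the diagonal part of the lower bound separately, using a Neumann series, a signature-matrix symmetry, and a Schur-complement identity, respectively.

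For invertibility and the upper bound, I would use the Neumann series. Because $\Delta\geq 0$ and $\varrho(\Delta)<1$, one has $(I-\Delta)^{-1}=\sum_{k\geq 0}\Delta^{k}\geq 0$, so $\overline{H}\geq 0$. For any $A=I-B\in\inum{A}$, $|B|\leq\Delta$ gives $\varrho(|B|)<1$, and the expansion $A^{-1}=\sum_{k}B^{k}$ yields $|A^{-1}|\leq\sum_{k}|B|^{k}\leq(I-\Delta)^{-1}=\overline{H}$ componentwise. Hence $A^{-1}\leq\overline{H}$, with equality at $A=I-\Delta$, which establishes the formula for $\overline{H}$.

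For the off-diagonal entries of $\underline{H}$ I would exploit a signature-matrix symmetry. Given $i\neq j$, let $S=\operatorname{diag}(s_{1},\ldots,s_{n})$ with $s_{j}=-1$ and $s_{k}=1$ otherwise. Since $|SAS-I|=|A-I|\leq\Delta$, $\inum{A}$ is closed under $A\mapsto SAS$, and $(SAS)^{-1}=SA^{-1}S$ has $(i,j)$-entry $s_{i}s_{j}(A^{-1})_{ij}=-(A^{-1})_{ij}$. The range of $(A^{-1})_{ij}$ over $A\in\inum{A}$ is therefore symmetric about $0$, so $\underline{h}_{ij}=-\overline{h}_{ij}$, attained at $A=S(I-\Delta)S$.

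The diagonal case is the main step. Set $\bar i:=\{1,\ldots,n\}\setminus\{i\}$. The Schur-complement formula for a single diagonal entry yields $(A^{-1})_{ii}=1/f(A)$, where $f(A):=A_{ii}-g(A)$ and $g(A):=A_{i,\bar i}\bigl(A_{\bar i,\bar i}\bigr)^{-1}A_{\bar i,i}$. The principal submatrix interval $\inum{A}_{\bar i,\bar i}$ has the same shape $[I-\Delta',I+\Delta']$ with $\Delta':=\Delta_{\bar i,\bar i}$; since $\Delta\geq 0$, the spectral radius of a principal submatrix obeys $\varrho(\Delta')\leq\varrho(\Delta)<1$, so the first step, applied recursively, gives $|A_{\bar i,\bar i}^{-1}|\leq(I-\Delta')^{-1}$. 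Therefore
\[
|g(A)|\;\leq\;\Delta_{i,\bar i}\,(I-\Delta')^{-1}\,\Delta_{\bar i,i}\;=:\;g^{*},
\]
and both extremes are realized in $\inum{A}$: $+g^{*}$ at $A=I-\Delta$ itself, and $-g^{*}$ after independently flipping the signs of the row-$i$ off-diagonals (permissible because $A_{i,\bar i}$ is a free interval parameter). As $A_{ii}\in[1-\Delta_{ii},1+\Delta_{ii}]$ varies independently of $g$, and $f>0$ throughout (by continuity from $I^{-1}=I$), one concludes
\[
\min_{A\in\inum{A}}f(A)=(1-\Delta_{ii})-g^{*},\qquad \max_{A\in\inum{A}}f(A)=(1+\Delta_{ii})+g^{*},
\]
so $\min f+\max f=2$. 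Taking reciprocals delivers the harmonic identity $1/\overline{h}_{ii}+1/\underline{h}_{ii}=2$, which rearranges to the stated formula $\underline{h}_{ii}=\overline{h}_{ii}/(2\overline{h}_{ii}-1)$.

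The main obstacle I anticipate is the careful bookkeeping in the diagonal case: everything rests on the four parameter groups $A_{ii}$, $A_{i,\bar i}$, $A_{\bar i,i}$, $A_{\bar i,\bar i}$ being \emph{independent} interval parameters of $\inum{A}$, which both permits the independent extremization of $A_{ii}$ and $g$ and drives the cancellation $\min g+\max g=0$. The recursive bound $|A_{\bar i,\bar i}^{-1}|\leq(I-\Delta')^{-1}$ feeds back into the Neumann-series step, so the full argument is really a single induction on $n$ with the trivial base case $n=1$ (where the formula reduces to $1/(1+\delta)=(1/(1-\delta))/(2/(1-\delta)-1)$).
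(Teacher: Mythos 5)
Your proof is correct, but note that the paper itself offers no proof of this statement: Theorem~1 is quoted verbatim from Rohn's paper (reference [9], Theorem~4.4) and used as a black box, so there is no in-paper argument to compare against. Rohn's own derivation in [9] rests on his vertex characterization of the exact inverse interval matrix (the extrema of $A^{-1}$ over an interval matrix are attained at sign-restricted vertex matrices $A_{yz}=A_c-D_y\Delta D_z$), which he then specializes to the unit-midpoint case. Your route is different and essentially self-contained: the Neumann-series bound $|A^{-1}|\leq(I-\Delta)^{-1}$ gives $\overline{H}$; the involution $A\mapsto SAS$ with a single sign flip gives the antisymmetry $\underline{h}_{ij}=-\overline{h}_{ij}$ off the diagonal (this is clean --- $SAS\in\inum{A}$ precisely because the midpoint $I$ is invariant under such conjugation); and the Schur-complement reduction $(A^{-1})_{ii}=1/f(A)$ with the independence of $A_{ii}$ from $g(A)$ yields $\min f+\max f=2$ and hence the harmonic identity $\underline{h}_{ii}=\overline{h}_{ii}/(2\overline{h}_{ii}-1)$. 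All the supporting facts you invoke hold: $\Delta\geq 0$ is implicit in $[I-\Delta,I+\Delta]$ being an interval matrix, $\varrho(\Delta_{\bar i,\bar i})\leq\varrho(\Delta)$ by Perron--Frobenius monotonicity for nonnegative matrices, $f\neq 0$ follows from $\det A=f\cdot\det A_{\bar i,\bar i}$ together with the regularity established in the first step, and both extremes of $g$ are realized inside $\inum{A}$ because the row-$i$ off-diagonal entries are free parameters. The only cosmetic remark is that the closing ``induction on $n$'' framing is unnecessary: the submatrix bound $|A_{\bar i,\bar i}^{-1}|\leq(I-\Delta')^{-1}$ is just the Neumann-series step applied to an $(n-1)\times(n-1)$ interval matrix of the same form, so there is no genuine recursion. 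What your approach buys is an elementary proof that avoids Rohn's general vertex machinery; what it costs is that it is tailored to the unit-midpoint case and does not generalize to arbitrary midpoints the way the vertex characterization does.
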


Next we recall the simplest single step method for obtaining the
$p,l$-solution to a united parametric solution set $\Sigma^p_{\rm
uni}$. Since $[p^-,p^+] = \check{p}+[-\hat{p},\hat{p}]$, with  the
notation $A(\check{p}) = A_0+\sum_{i=1}^K \check{p}_i A_i$,
$a(\check{p}) = a_0+\sum_{i=1}^K \check{p}_i a_i$, system
(\ref{pls}) is equivalent to the interval parametric system
\begin{equation}\label{sysK2}
\left(A(\check{p})+\sum_{i=1}^K p_i'A_i\right)x = a(\check{p}) +
\sum_{i=1}^K p_i'a_i, \qquad p'\in [-\hat{p},\hat{p}]\in\IR^K.
\end{equation}
The following theorem is modified from \cite[Theorem 1]{Kol2016a}
for the system (\ref{sysK2}).

\begin{theorem}[{\cite[Theorem 1]{Kol2016a}}] \label{Kolev1}
Let $A(\check{p})$ in (\ref{sysK2}) be nonsingular. Denote
$\check{x}= \left(A(\check{p})\right)^{-1}a(\check{p})$, \
$F=(a_1,\ldots, a_K)$, \ $G = (A_1\check{x},\ldots, A_K\check{x})$,
\ $B^0 = \left(A(\check{p})\right)^{-1}(F-G)$. Assume that
\begin{equation}\label{rhoDelta}
\varrho
(\sum_{i=1}^K\left|\left(A(\check{p})\right)^{-1}A_i\right|\hat{p}_i)<1.
\end{equation}
Then
\begin{itemize}
\item[(i)] $A(p')$ in (\ref{sysK2}) is regular for each $p'\in [-\hat{p},\hat{p}]\in\IR^K$;

\item[(ii)] the united $p,l$-solution $x(p',l)$ of the system (\ref{sysK2})
exists and is determined by
\begin{equation}\label{x(p,l)}
x(p',l) = \check{x} + Vp' + l, \qquad p'\in
[-\hat{p},\hat{p}]\in\IR^K, \; l\in [-\hat{l},\hat{l}]\in\IR^n,
\end{equation}
\end{itemize}
where $V=\check{H}B^0$, $\hat{l}=\hat{H}|B^0|\hat{p}$, and
$\check{H}$, $\hat{H}$ are the midpoint and radius matrices,
respectively, of the inverse interval matrix
$\inum{H}=[\underline{H},\overline{H}]$ obtained by Theorem
\ref{RohnThm} for $\Delta =
\sum_{i=1}^K\left|\left(A(\check{p})\right)^{-1}A_i\right|\hat{p}_i$.
\end{theorem}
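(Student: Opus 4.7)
The plan is to prove both parts by reducing the parametric system (\ref{sysK2}) to a perturbation of the identity matrix, thereby bringing Theorem \ref{RohnThm} directly into play. Preconditioning (\ref{sysK2}) by $A(\check{p})^{-1}$ gives the equivalent system $M(p')x = A(\check{p})^{-1}a(p')$, where $M(p') := I + \sum_{i=1}^K p_i' A(\check{p})^{-1}A_i$. Since $|M(p') - I| \leq \sum_{i=1}^K \hat{p}_i\,|A(\check{p})^{-1}A_i| = \Delta$ componentwise for every $p' \in [-\hat{p},\hat{p}]$, the matrix $M(p')$ lies in $[I-\Delta,\,I+\Delta]$, and the assumption (\ref{rhoDelta}) places us precisely in the setting of Theorem \ref{RohnThm}.

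For part (i), Theorem \ref{RohnThm} implicitly asserts that $[I-\Delta,\,I+\Delta]$ is a regular interval matrix: its inverse interval matrix $\inum{H}=[\underline{H},\overline{H}]$ is well-defined, so every $M(p')$ in it is non-singular. Since $A(p') = A(\check{p})\,M(p')$ and $A(\check{p})$ is non-singular by assumption, $A(p')$ is non-singular for each $p'\in[-\hat{p},\hat{p}]$.

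For part (ii), I would substitute $x = \check{x} + \delta x$ into (\ref{sysK2}), use $A(\check{p})\check{x} = a(\check{p})$ to cancel the nominal terms, and collect the residual as $M(p')\,\delta x = A(\check{p})^{-1}(F - G)p' = B^0 p'$, where the linear-in-$p'$ contributions $a_i - A_i\check{x}$ have been gathered into the columns of $F - G$. Theorem \ref{RohnThm} then gives $M(p')^{-1} \in \inum{H}$, so I can write $M(p')^{-1} = \check{H} + E(p')$ with $|E(p')| \leq \hat{H}$. Distributing yields $\delta x = \check{H}B^0 p' + E(p')B^0 p' = Vp' + l$ with $V = \check{H}B^0$, and the componentwise bound $|l| \leq \hat{H}|B^0|\hat{p} = \hat{l}$ follows from the standard interval-arithmetic inequality $|AB| \leq |A|\,|B|$ applied with $A = E(p')$, $B = B^0 p'$.

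The main delicate point is the passage from "$M(p')^{-1}$ belongs to the inverse interval matrix $\inum{H}$" to the pointwise additive decomposition $M(p')^{-1} = \check{H} + E(p')$ with $|E(p')| \leq \hat{H}$: this hinges on recognising that membership $H \in \inum{H}$ is just the componentwise inequality $\underline{H} \leq H \leq \overline{H}$, equivalently $|H - \check{H}| \leq \hat{H}$. Once this identification is made, the rest of the argument is bookkeeping, and no further hypothesis on the structure of the $A_i$ or $a_i$ is required.
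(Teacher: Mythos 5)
The paper itself gives no proof of this theorem --- it is imported verbatim (modulo the change of variables $p\mapsto p'$) from Kolev's work \cite{Kol2016a} and merely restated --- so there is no in-paper argument to compare against. Your derivation is correct and is the standard one behind such direct $p,l$-solution methods: preconditioning by $A(\check{p})^{-1}$ places the system matrix in $[I-\Delta, I+\Delta]$, the residual equation $M(p')\,\delta x = B^0p'$ follows from $A(\check{p})\check{x}=a(\check{p})$, and the split $M(p')^{-1}=\check{H}+E(p')$ with $|E(p')|\leq\hat{H}$ (which, as you note, is just the componentwise reading of $M(p')^{-1}\in[\underline{H},\overline{H}]$) yields exactly $V=\check{H}B^0$ and $\hat{l}=\hat{H}|B^0|\hat{p}$.
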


Most of the $p,l$-solutions considered so far
(\cite{Kol2014}--\cite{SkalnaHla19}, except \cite{Kolev2018})
require or check the condition (\ref{rhoDelta}), which determines
the scope of applicability of the Kolev-style parameterized
solutions. The method of Theorem \ref{Kolev1} will be used in the
comparisons that follow as a representative of all these methods,
although some of them may outperform others in the solution
enclosure.

%%%%%%%%%%%%%%%%%%%%%%%%%%%%%%%%%%%%%%%%%%%%%%%%%%%%%%%%%%%%%%%
\section{New parameterized solution for $\Sigma^p_{uni}$} \label{pNew}

Let ${\cal K}=\{1,\ldots,K\}$ and $\pi',\pi''$ be two subsets of
${\cal K}$ such that $\pi'\cap\pi''=\emptyset$, $\pi'\cup\pi''={\cal
K}$. Denote $p_\pi = (p_{\pi_1},\ldots,p_{\pi_{K_1}})$ for
$\pi\subseteq{\cal K}$, Card$(\pi)=K_1$. Denote by $D_{p_{\pi}}$ a
diagonal matrix with diagonal vector $p_{\pi}$.

In order to obtain a new parameterized solution to the united
parametric solution set of (\ref{pls}) we consider the following
equivalent form of the parametric system
\begin{equation}\label{plsG}
\left(A_0+LD_{g(p_{\pi'})}R\right)x = a_0+LD_{g(p_{\pi'})}t +
Fp_{\pi''}, \quad p\in\inum{p}
\end{equation}
with particular $\pi',\pi''\subseteq{\cal K}$ and suitable numerical
matrices $L,R$, numerical vector $t$, and a parameter vector
$g(p_{\pi'})$, which provide equivalent optimal rank one
representation (cf.~\cite{PopovaED:2018} or Definition
\ref{optimal}) of either $A(p_{\pi'})-A_0$, or of
$A^\top(p_{\pi'})-A_0^\top$, and $\sum_{k\in\pi'} p_ka_k =
LD_{g(p_{\pi'})}t$. The permutation $\pi'$ denotes the indices of
the parameters that appear in both the matrix and the right-hand
side of the system, while $\pi''$ involves the indices of the
parameters that appear only in $a(p)$ in (\ref{pls}). Next
definition is summarized from \cite{PopovaED:2018}.

\begin{definition}\label{optimal}
For a parametric matrix $A(p_{\pi'})=A_0+\sum_{k\in\pi'}p_kA_k$,
Card$(\pi')=K_1$, the following representation (called also
LDR-representation)
\begin{equation}\label{LDRmat}
A_0 + LD_{g(p_{\pi'})}R,
\end{equation}
where $g(p_{\pi'})\in\R^s$, $s=\sum_{k=1}^{K_1} s_k$, $g(p_{\pi'}) =
\left(g_1^\top (p_{\pi'_1}),\ldots, g_{K_1}^\top
(p_{\pi'_{K_1}})\right)^\top$, $L=\left(L_1,\ldots,
L_{K_1}\right)\in\R^{n\times s}$,
$R=\left(R_1^\top,\ldots,R_{K_1}^\top\right)^\top\in\R^{s\times n}$
and for $1\leq k\leq K_1$,
$g_k(p_{\pi'_k})=(p_{\pi'_k},\ldots,p_{\pi'_k})^\top\in\R^{s_k}$,
$p_{\pi'_k}A_{\pi'_k} = L_kD_{g_k(p_{\pi'_k})}R_k$,  is an {\bf
equivalent optimal rank one representation} of $A(p_{\pi'})$ if
\begin{itemize}
\item[(i)] (\ref{LDRmat}) restores $A(p_{\pi'})$ exactly, that is
$$
A(p_{\pi'}) = A_0 + LD_{g(p_{\pi'})}R =  A_0+ \sum_{k=1}^{K_1}
L_kD_{g_k(p_{\pi'_k})}R_k;
$$
\item[(ii)] for each parameter $g_i$,
$1\leq i\leq s$, $g_i\in g_i(\inum{p}_{\pi'})$, its coefficient
matrix $A_i$ has rank one, that is $A_i = L_{\bullet
i}R_{i\bullet}$;
\item[(iii)] for each $1\leq k\leq K_1$, the dimension $s_k$ of the diagonal
vector $g_k(p_{\pi'_k})$ is equal to the rank of $A_k$.
\end{itemize}
\end{definition}
It should be noted that condition (ii) of Definition \ref{optimal}
entails the adjective ``rank one'' of the
 representation (\ref{LDRmat}), while the condition (iii) entails the adjective
``optimal''. There are various ways to obtain the representation
(\ref{plsG}), cf.~\cite{Piziak}, \cite{PopovaED:2017:ESS}. In what
follows, in the representation (\ref{plsG}) we will not distinguish
between the equivalent representations and between the
representations originated from $A(p_{\pi'})$ or from $A^\top
(p_{\pi'})$; the difference is essential for the applications,
cf.~\cite[Example 8]{PopovaED:2017:ESS}. The following theorem
presents a method (proposed in \cite{PopovaED:2018}) for computing
numerical interval enclosure of a parametric united solution set.

\begin{theorem}\label{solution1}
Let the system (\ref{pls}) have equivalent representation
(\ref{plsG}) with optimal rank one representation of $A(p)$ and let
the matrix $A(\check{p})$ be nonsingular. Denote
$C=A^{-1}(\check{p})$ and $\check{x}=Ca(\check{p})$. If
\begin{equation}\label{strReg2}
\varrho\left(\left|(RCL)D_{g(\check{p}_{\pi'}-\inum{p}_{\pi'})}\right|\right)<1,
\end{equation}
then
\begin{itemize}
\item[(i)] $\Sigma_{uni}\left(A(p),a(p), \inum{p}\right)$ and the united
solution set $\Sigma_{\rm uni}({\rm Eq.}(\ref{eqY}))$ of the
interval parametric system
\begin{equation}\label{eqY}
\left(I-RCLD_{g(p_{\pi'})}\right)y = R\check{x} - RCFp_{\pi''} -
RCLD_{g(p_{\pi'})}t, \quad  p\in [-\hat{p},\hat{p}]
\end{equation}
are bounded,
\item[(ii)] $\inum{y}\supseteq\Sigma_{\rm uni}({\rm Eq.}(\ref{eqY}))$
is computable by methods that require (\ref{rhoDelta}),
\item[(iii)]
every $x\in\Sigma_{uni}\left(A(p), a(p), \inum{p}\right)$ satisfies
\begin{equation}\label{xx}
x  \in \check{x} - (CF)[-\hat{p}_{\pi''}, \hat{p}_{\pi''}] +
(CL)\left(D_{g([-\hat{p}_{\pi'},
\hat{p}_{\pi'}])}|\inum{y}-t|\right).
\end{equation}
\end{itemize}
\end{theorem}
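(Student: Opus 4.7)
I would begin by passing to centered parameters $p' = p - \check{p} \in [-\hat{p},\hat{p}]$. Using linearity of the stacking map $g$, we have $LD_{g(p_{\pi'})}R = LD_{g(\check{p}_{\pi'})}R + LD_{g(p'_{\pi'})}R$, and similarly $LD_{g(p_{\pi'})}t$ splits into the nominal part and the perturbation. Thus any $x \in \Sigma_{\rm uni}(A(p),a(p),\inum{p})$ satisfies
$$\bigl(A(\check{p}) + LD_{g(p'_{\pi'})}R\bigr)x = a(\check{p}) + LD_{g(p'_{\pi'})}t + Fp'_{\pi''}.$$
Pre-multiplying by $C = A(\check{p})^{-1}$ and rearranging gives the pointwise identity
$$x = \check{x} + CFp'_{\pi''} - CLD_{g(p'_{\pi'})}(Rx - t), \qquad p' \in [-\hat{p},\hat{p}].$$
This identity is the backbone of the argument and will deliver (iii) once $Rx$ is controlled.

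The crucial reduction is to introduce the auxiliary variable $y := Rx \in \R^s$ and multiply the identity above by $R$, which yields $(I + RCLD_{g(p'_{\pi'})})y = R\check{x} + RCFp'_{\pi''} + RCLD_{g(p'_{\pi'})}t$. Exploiting the symmetry of the centered box (replacing $p'$ by $-p'$ leaves the range invariant and flips the sign of $D_{g(p'_{\pi'})}$ by linearity of $g$) converts this into exactly the system (\ref{eqY}). Consequently, every original solution $x$ for some $p \in \inum{p}$ produces a $y = Rx$ belonging to $\Sigma_{\rm uni}({\rm Eq.}(\ref{eqY}))$; this embedding is what transfers the enclosure property from the reduced $s$-dimensional system back to the original $n$-dimensional one.

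For parts (i) and (ii), I would observe that the coefficient matrix of (\ref{eqY}) has the shape $I - (RCL)D_{g(p'_{\pi'})}$, which is precisely of the form to which Theorem~\ref{RohnThm} and Theorem~\ref{Kolev1} apply: the associated $\Delta$ matrix is $|RCL|\,D_{g(\hat{p}_{\pi'})}$, whose spectral radius is $< 1$ by hypothesis (\ref{strReg2}). Hence the united solution set of (\ref{eqY}) is bounded and any classical enclosure method (e.g.\ the single-step method of Theorem~\ref{Kolev1}) delivers an interval vector $\inum{y} \supseteq \Sigma_{\rm uni}({\rm Eq.}(\ref{eqY}))$. Boundedness of $\Sigma_{\rm uni}(A(p),a(p),\inum{p})$ follows from the identity for $x$ once $Rx$ is known to lie in a bounded set and $p'$ ranges over a compact box.

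For (iii), I would substitute $y = Rx \in \inum{y}$ and $p' \in [-\hat{p},\hat{p}]$ into the pointwise identity for $x$ and evaluate in interval arithmetic. The $CFp'_{\pi''}$ term produces $-(CF)[-\hat{p}_{\pi''},\hat{p}_{\pi''}]$ (the sign being cosmetic since the box is symmetric). The key subtlety is the product $D_{g(p'_{\pi'})}(y - t)$: because $g(p'_{\pi'})$ ranges over a symmetric zero-centered box independently of $y$, its interval-arithmetic range equals $D_{g([-\hat{p}_{\pi'},\hat{p}_{\pi'}])}\,|\inum{y} - t|$, which is exactly the term appearing in (\ref{xx}). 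I expect the main obstacle to be notational: keeping track of the index split $\pi'\cup\pi'' = \mathcal{K}$, verifying that the linearity of $g$ is used correctly under sign reversal, and matching the ``optimal rank one'' bookkeeping of Definition~\ref{optimal} with the dimensions of $L$, $R$, and the diagonal $D_{g(\cdot)}$ — rather than any deep analytical step.
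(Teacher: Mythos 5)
The paper itself gives no proof of Theorem~\ref{solution1} (it is imported from \cite{PopovaED:2018}), but your reconstruction is correct and follows exactly the derivation that the construction of (\ref{eqY}) and (\ref{xx}) encodes: center at $\check{p}$, obtain the fixed-point identity $x=\check{x}+CFp'_{\pi''}-CLD_{g(p'_{\pi'})}(Rx-t)$, reduce via $y=Rx$ to the $s$-dimensional system, and use the sign symmetry of $[-\hat{p},\hat{p}]$ to match (\ref{eqY}). The two verifications that carry the load --- that (\ref{strReg2}) is precisely condition (\ref{rhoDelta}) for the reduced system (whose midpoint matrix is $I$ and whose $\Delta$ is $|RCL|D_{g(\hat{p}_{\pi'})}$), and that $D_{g(p'_{\pi'})}(y-t)\in D_{g([-\hat{p}_{\pi'},\hat{p}_{\pi'}])}\,|\inum{y}-t|$ because the $g$-box is symmetric and zero-centered --- both check out, so the argument is sound (your claim that the last containment is an equality of ranges is a harmless overstatement, since $y$ and $p'$ are dependent, but only ``$\subseteq$'' is needed for (iii)).
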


\begin{theorem}\label{pg-sol}
Let the system (\ref{pls}) have equivalent representation
(\ref{plsG}) with optimal rank one representation of $A(p)$ and let
the matrix $A(\check{p})$ be nonsingular. Denote
$C=A^{-1}(\check{p})$ and $\check{x}=Ca(\check{p})$. If
(\ref{strReg2}) holds true, then
\begin{itemize}
\item[i)] there exists a united {\bf parameterized solution} of the system
(\ref{pls}), respectively the system (\ref{plsG}),
\begin{multline}\label{x(pg)}
x(p_{\pi''},g) = \check{x} - (CF)p_{\pi''} + \left(CLD_{|\inum{y} -
t|}\right)g,
\\ p_{\pi''}\in [-\hat{p}_{\pi''}, \hat{p}_{\pi''}], \; g\in
g([-\hat{p}_{\pi'}, \hat{p}_{\pi'}]),
\end{multline}
where $\inum{y}\supseteq\Sigma_{\rm uni}({\rm Eq.}(\ref{eqY}))$,
\item[ii)] with the same $\inum{y}$ used in (\ref{xx}) and in (\ref{x(pg)}),
the interval vector $x\left([-\hat{p}_{\pi''},
\hat{p}_{\pi''}],\right.$ $\left.g([-\hat{p}_{\pi'},
\hat{p}_{\pi'}])\right)$ is equal to the interval vector obtained by
Theorem \ref{solution1}.
\end{itemize}
\end{theorem}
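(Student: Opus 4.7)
The plan is to deduce both parts of Theorem~\ref{pg-sol} directly from Theorem~\ref{solution1} by a purely algebraic rewrite of the enclosure (\ref{xx}). The hypotheses of the two theorems coincide, so the existence of an interval enclosure $\inum{y}\supseteq\Sigma_{\rm uni}({\rm Eq.}(\ref{eqY}))$ and the inclusion (\ref{xx}) are already in hand; what remains is to recognize that the right-hand side of (\ref{xx}) is actually affine-linear in the parameters $p_{\pi''}$ and $g$, and then to put it into the advertised form.

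For part (i), I would isolate the only factor in (\ref{xx}) that is not yet displayed as linear in the target parameters, namely $(CL)\bigl(D_{g([-\hat{p}_{\pi'},\hat{p}_{\pi'}])}\,|\inum{y}-t|\bigr)$. Here $|\inum{y}-t|\in\R^s$ is a purely numerical vector (magnitudes are taken componentwise) and $g(p_{\pi'})\in\R^s$ is the parameter vector of the LDR representation, both of dimension $s$ by construction of (\ref{eqY}). The elementary identity $D_a b = D_b a$ for column vectors $a,b$ of equal length — both sides being the Hadamard product — applied with $a=g(p_{\pi'})$ and $b=|\inum{y}-t|$, rewrites this factor as $(CL\,D_{|\inum{y}-t|})\,g$. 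Substituting into (\ref{xx}) produces exactly the affine expression (\ref{x(pg)}) and establishes the existence of the claimed parameterized solution.

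For part (ii), I would exploit the fact that the rewrite in (i) is an exact algebraic identity between two expressions that are affine in the same independent interval-valued parameters: $p_{\pi''}\in[-\hat{p}_{\pi''},\hat{p}_{\pi''}]$ and $g\in g([-\hat{p}_{\pi'},\hat{p}_{\pi'}])$, which are genuinely independent because $\pi'\cap\pi''=\emptyset$. The interval hull of an affine function over a product of interval boxes is invariant under such an algebraic equivalence, so the interval hull of $x(p_{\pi''},g)$ coincides componentwise with the natural interval evaluation of the right-hand side of (\ref{xx}), which is precisely the enclosure returned by Theorem~\ref{solution1}.

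The main obstacle is essentially bookkeeping rather than mathematics: one has to make sure that the same $\inum{y}$ is used in both (\ref{xx}) and (\ref{x(pg)}) (as already required in the statement), that the dimensions $g\in\R^s$ and $\inum{y}-t\in\R^s$ match so that $D_{|\inum{y}-t|}$ is a well-defined $s\times s$ diagonal matrix, and that repeated occurrences of each original parameter $p_{\pi'_k}$ inside $g_k(p_{\pi'_k})=(p_{\pi'_k},\ldots,p_{\pi'_k})^\top$ are treated consistently when the hull is formed; otherwise one would be taking the hull over a larger, artificially decoupled range. Once this identification is kept straight, the entire proof collapses to the one-line identity $D_a b = D_b a$.
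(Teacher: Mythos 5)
Your proof is correct and follows essentially the same route as the paper: apply Theorem \ref{solution1}, rewrite $D_{g(p_{\pi'})}\left|\inum{y}-t\right|$ as $D_{\left|\inum{y}-t\right|}\,g(p_{\pi'})$ via the Hadamard-product identity, and observe that the interval evaluation of the resulting affine function in single-occurrence parameters reproduces the box of (\ref{xx}). One caution on your closing remark: the paper deliberately \emph{does} decouple the repeated occurrences of each $p_{\pi'_k}$ (it renames them as independent components of $g$), since it is precisely this decoupled hull that equals the natural interval evaluation in (\ref{xx}) and hence yields the equality in (ii), while the enclosure in (i) survives because the decoupled box $g([-\hat{p}_{\pi'},\hat{p}_{\pi'}])$ contains the coupled diagonal range.
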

\begin{proof}
Since (\ref{strReg2}) holds true, we apply Theorem \ref{solution1}
and solving (\ref{eqY}) obtain interval vector $\inum{y}\supseteq
\Sigma_{\rm uni}({\rm Eq.}(\ref{eqY}))$. By (iii) of Theorem
\ref{solution1}
 we obtain
\begin{equation*} %\label{eq11}
\inum{x} =  \check{x} - (CF)[-\hat{p}_{\pi''}, \hat{p}_{\pi''}] +
(CL)D_{g([-\hat{p}_{\pi'}, \hat{p}_{\pi'}])}\left|\inum{y} -
t\right|.
\end{equation*}
Since this $\inum{x}$ is obtained as natural interval
extension\footnote{For interval extensions of a real function see,
e.g., \cite{Kearfott}.} of the function
\begin{equation}\label{x(p')}
x(p) = \check{x} - (CF)p_{\pi''} + (CL)D_{\left|\inum{y} -
t\right|}g(p_{\pi'}), \quad p\in [-\hat{p}, \hat{p}],
\end{equation}
we rename all interval parameters $p_{\pi'}$, that occur more than
once, and consider $x(p_{\pi''},g)$  in (\ref{x(pg)}) as a linear
interval function with single occurrence of the interval parameters
that satisfies (ii). Thus, the existence of (\ref{x(pg)}) and (ii)
follow from Theorem \ref{solution1}.
\end{proof}

It is clear from (\ref{x(pg)}) that the newly proposed parameterized
solution $x(p_{\pi''},g)$ is a linear function of Card$(\pi'')+s$
interval parameters $p_{\pi''}$, $g$. More precisely, this
parameterized solution is a function of $K+(s-K_1)$ interval
parameters $p$, $g'$, and the vector $g$ involves $s-K_1$ auxiliary
interval parameters $g'$. For the applications, it should be kept in
mind that it is actually a function of interval parameters $p\in
[-\hat{p}, \hat{p}]$ only, some of them multiply occurring in the
general case, see Section \ref{appl}. Representation (\ref{LDRmat})
is used in proving the regularity condition (\ref{strReg2}) and in
Theorems \ref{solution1}, \ref{pg-sol} above to approximate the
solution set of (\ref{pls}) by the solution set of a system
involving $s$ independent interval parameters in the matrix. More
details about the difference between the interval parametric
methods, based on condition (\ref{rhoDelta}), and those based on
rank one approximation of the parameter dependencies, condition
(\ref{strReg2}), can be found in \cite{PopovaED:2018}.

Parametric linear systems involving rank one interval
parameters are widely spread in various application domains.
Examples of such systems originating from models of electrical
circuits, in biology and structural mechanics are presented in
\cite{Pop2014CAMWA}.

\begin{corollary}\label{p-sol}
Let the system (\ref{pls}) have equivalent representation
(\ref{plsG}) with optimal rank one representation of $A(p)$ and each
$A_k$ have rank one. Let the matrix $A(\check{p})$ be nonsingular.
Denote $C=A^{-1}(\check{p})$ and $\check{x}=Ca(\check{p})$. If
(\ref{strReg2}) holds true, then
\begin{itemize}
\item[i)] there exists a united {\bf parameterized solution} of the system
(\ref{pls}), respectively the system (\ref{plsG}),
\begin{equation}\label{x(p)1}
x(p) = \check{x} - (CF)p_{\pi''} + \left(CLD_{|\inum{y} -
t|}\right)p_{\pi'}, \qquad p\in [-\hat{p}, \hat{p}],
\end{equation}
where $\inum{y}\supseteq\Sigma_{\rm uni}({\rm Eq.}(\ref{eqY}))$,
\item[ii)] with the same $\inum{y}$ used in (\ref{xx}) and in (\ref{x(p)1}),
the interval vector $x([-\hat{p},\hat{p}])$ is equal to the interval
vector obtained by Theorem \ref{solution1}.
\end{itemize}
\end{corollary}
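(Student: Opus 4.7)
The plan is to specialize Theorem \ref{pg-sol} to the rank-one case and observe that, under the extra hypothesis, the auxiliary parameter vector $g$ collapses to $p_{\pi'}$ itself, so no renaming or duplication of parameters is required. First I would note that all hypotheses of Theorem \ref{pg-sol} are verbatim satisfied: $A(\check{p})$ is nonsingular, the system has the equivalent representation (\ref{plsG}) with optimal rank one representation of $A(p)$, and (\ref{strReg2}) holds. Therefore Theorem \ref{pg-sol} already delivers a united parameterized solution $x(p_{\pi''},g)$ of the form (\ref{x(pg)}), together with the equality of interval hulls asserted in its part (ii), for the same $\inum{y}\supseteq\Sigma_{\rm uni}({\rm Eq.}(\ref{eqY}))$.

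The key step is then to unpack the structure of $g(p_{\pi'})$ when each coefficient matrix $A_k$ (effectively $k\in\pi'$) has rank one. By condition (iii) of Definition \ref{optimal}, the block dimension $s_k$ equals $\operatorname{rank}(A_k)=1$, so $s=\sum_{k=1}^{K_1}s_k=K_1$. Since Definition \ref{optimal} sets $g_k(p_{\pi'_k})=(p_{\pi'_k},\ldots,p_{\pi'_k})^\top\in\R^{s_k}$, each block $g_k$ reduces to the single scalar $p_{\pi'_k}$, and stacking these blocks gives $g(p_{\pi'})=p_{\pi'}$. Consequently the range $g([-\hat{p}_{\pi'},\hat{p}_{\pi'}])$ coincides with $[-\hat{p}_{\pi'},\hat{p}_{\pi'}]$ and no auxiliary interval parameters are introduced ($s-K_1=0$).

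Substituting this identification into (\ref{x(pg)}) replaces $g$ by $p_{\pi'}$ throughout and yields exactly formula (\ref{x(p)1}), which proves (i). Part (ii) follows at once from part (ii) of Theorem \ref{pg-sol}: the interval hull $x([-\hat{p},\hat{p}])$ produced by (\ref{x(p)1}) is, variable by variable, the same as the hull of $x(p_{\pi''},g)$ over $p_{\pi''}\in[-\hat{p}_{\pi''},\hat{p}_{\pi''}]$ and $g\in g([-\hat{p}_{\pi'},\hat{p}_{\pi'}])$, which by Theorem \ref{pg-sol} equals the interval vector obtained from Theorem \ref{solution1}.

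There is no substantive obstacle; the only thing requiring a bit of care is the bookkeeping that justifies the identification $g=p_{\pi'}$. Concretely, because each block has size one, no parameter $p_{\pi'_k}$ occurs more than once in $g(p_{\pi'})$, so the renaming argument in the proof of Theorem \ref{pg-sol} (which is what turns the natural extension (\ref{x(p')}) into the single-occurrence linear form (\ref{x(pg)})) becomes vacuous, and (\ref{x(p)1}) is genuinely a linear interval function of the original parameters $p$ alone with one occurrence per coordinate.
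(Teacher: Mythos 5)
Your proof is correct and matches the paper's intent: the paper states Corollary \ref{p-sol} without a separate proof, treating it as the immediate specialization of Theorem \ref{pg-sol} in which condition (iii) of Definition \ref{optimal} forces $s_k=1$ for every $k$, hence $s=K_1$ and $g(p_{\pi'})=p_{\pi'}$, which is exactly the identification you carry out. Your added remark that the renaming step in the proof of Theorem \ref{pg-sol} becomes vacuous (each parameter occurs only once) is precisely the right bookkeeping and is consistent with the paper's surrounding discussion of the $s-K_1$ auxiliary parameters.
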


The parameterized solutions (\ref{x(p,l)}), (\ref{x(pg)}),
(\ref{x(p)1}), as well as any other parameterized solutions of
$\Sigma^p_{\rm uni}$ considered so far, can be represented in a
uniform way as
\begin{equation*}
x(q) = \check{x} + U q, \quad q\in [-\hat{q}, \hat{q}],
\end{equation*}
where the parameter vector $q$ and the numerical matrix $U$ provide
equivalence to (\ref{x(p,l)}), (\ref{x(pg)}), (\ref{x(p)1}) of the
corresponding theorem, respectively. For example, $U=\left(-CF,
CLD_{|\inum{y}-t|}\right)$ and $q=\left(p_{\pi''}^\top,
g^\top\right)^\top$ give (\ref{x(pg)}) of Theorem \ref{pg-sol}.
Similarly,  $U=\left(V, I\right)$ and $q=\left((p')^\top,
l^\top\right)^\top$ give (\ref{x(p,l)}) of Theorem \ref{Kolev1},
while  $U=\left(CLD_{|\inum{y}-t|}, -CF\right)$ and
$q=\left(p_{\pi'}^\top, p_{\pi''}^\top\right)^\top$ give
(\ref{x(p)1}) of Corollary \ref{p-sol}.

%%%%%%%%%%%%%%%%%%%%%%%%%%%%%%%%%%%%%%%%%%%%%%%%%%%%%%%%%%%%%%%
\section{Properties and Comparison} \label{properties}

In this section we present some properties of the parameterized
solutions and compare the two kinds of these solutions.

\begin{theorem}\label{convex}
Geometrically, the two kinds of parameterized solutions, Kolev-style
$p,l$-solutions and the newly proposed $p,g$-solution, are bounded
convex polytopes.
\end{theorem}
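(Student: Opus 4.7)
The plan is to exploit the uniform representation
\[
x(q) \;=\; \check{x} + Uq, \qquad q \in [-\hat{q},\hat{q}],
\]
already derived at the end of Section \ref{pNew}, which subsumes both $(\ref{x(p,l)})$ and $(\ref{x(pg)})$ (and $(\ref{x(p)1})$) under a single affine map from a box in parameter space into $\R^n$. Since the theorem is a purely geometric statement about the image sets, once both families are cast in this common form, they may be treated simultaneously.

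First I would observe that the domain $[-\hat{q},\hat{q}]\subset\R^m$ (with $m=K+n$ in the Kolev-style case and $m=\mathrm{Card}(\pi'')+s$ in the new case) is a nondegenerate axis-aligned box. A box is the convex hull of its $2^m$ vertices $\{v_1,\ldots,v_{2^m}\}$, hence it is a bounded convex polytope. Next I would recall the elementary fact that for any affine map $\varphi(q)=\check{x}+Uq$, the image $\varphi(\mathrm{conv}\,S)=\mathrm{conv}\,\varphi(S)$ for any finite set $S\subset\R^m$; consequently
\[
\{x(q)\mid q\in[-\hat{q},\hat{q}]\}
\;=\;\mathrm{conv}\bigl\{\check{x}+Uv_j\mid j=1,\ldots,2^m\bigr\}.
\]
Being the convex hull of finitely many points in $\R^n$, this set is by definition a convex polytope, and it is bounded because it is the continuous image of a compact set.

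Finally I would apply this observation separately to the two kinds of parameterized solutions. For the Kolev-style $p,l$-solution use $U=(V,I)$ and $q=((p')^\top,l^\top)^\top$, so the polytope sits in $\R^n$ as $\check{x}+V[-\hat p,\hat p]+[-\hat l,\hat l]$, a Minkowski sum of a zonotope and a box, which is manifestly a bounded convex polytope. For the new $p,g$-solution take $U=(-CF,\,CLD_{|\inum y-t|})$ and $q=(p_{\pi''}^\top,g^\top)^\top$, so again $(\ref{x(pg)})$ is an affine image of a box, and the same argument applies.

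The only real subtlety — and the one step worth stating carefully — is that in the $p,g$-case the entries of $g$ are not independent interval parameters but repeated copies of the $p_{\pi'_k}$; however, after the renaming performed in the proof of Theorem \ref{pg-sol} these copies are treated as free parameters ranging independently over $[-\hat p_{\pi'_k},\hat p_{\pi'_k}]$, exactly as $\inum{y}$ was computed. Thus the relevant domain is genuinely a box, and the preceding argument delivers a bounded convex polytope. This is essentially the only obstacle, and it is resolved directly by the construction already adopted in Theorem \ref{pg-sol}.
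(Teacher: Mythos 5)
Your proof is correct and follows essentially the same route as the paper: both arguments exhibit each parameterized solution as the affine image of an interval box and conclude it is a bounded convex polytope. The only cosmetic difference is that you derive boundedness from compactness of the box under a continuous affine map, while the paper attributes it to the regularity conditions (\ref{rhoDelta}), (\ref{strReg2}), which are what guarantee the finite coefficient matrices exist in the first place; the substance is the same.
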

\begin{proof}
From the representations (\ref{x(pg)}) and (\ref{x(p,l)}), it is
obvious that the two kinds of parameterized solutions are convex
polytopes as affine images of the interval boxes
$\left([-\hat{p}_{\pi''}, \hat{p}_{\pi''}]^\top, g([-\hat{p}_{\pi'},
\hat{p}_{\pi'}])^\top\right)^\top$ for $x(p_{\pi''},g)$ and
$([-\hat{p},\hat{p}]^\top, [-\hat{l}, \hat{l}]^\top)^\top$ for
$x(p,l)$. The convex polytopes are bounded due to the regularity
conditions (\ref{strReg2}) and  (\ref{rhoDelta}), respectively.
\end{proof}

The first difference between the two kinds of parameterized
solutions follows from the conditions (\ref{rhoDelta}),
(\ref{strReg2}) for their existence, which imply their scope of
applicability. It is proven in \cite[Theorem 3.2]{PopovaED:2018}
that condition (\ref{strReg2}) is more general than (\ref{rhoDelta})
and more powerful for large class of problems. Therefore, the newly
proposed parameterized solution $x(p_{\pi''},g)$ is applicable to a
wider class of parametric interval linear systems. The expanded
scope of applicability is demonstrated in \cite[Examples 5 and
8]{PopovaED:2017:ESS}, as well as in \cite{NePow}.  In what follows
we will not consider examples for which Kolev-style $p,l$-solutions
cannot be found. The focus will be on comparing the two kinds of
parameterized solutions when both exist.

\begin{theorem}\label{inclusion}
For a system (\ref{pls}) involving only rank one interval
parameters\footnote{rank$(A_k)=1$, $k=1,\ldots, K_1$} in the matrix
and for which both (\ref{rhoDelta}), (\ref{strReg2}) hold true, the
convex polytope representing a Kolev-style $p,l$-solution,
$\inum{l}\neq 0$, contains the convex polytope representing the
newly proposed $p$-solution.
\end{theorem}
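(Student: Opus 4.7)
The plan is to realize both polytopes as affine images of interval boxes and then to construct, for each point of the newly proposed $p$-solution, an explicit $(p',l)$ witness showing that it lies in the Kolev-style polytope. In the rank-one setting Corollary \ref{p-sol} gives the new polytope as
$$P_{\text{new}} = \check{x} + W\,[-\hat{p},\hat{p}],\qquad W = \bigl(CLD_{|\inum{y}-t|},\ -CF\bigr),$$
with the columns ordered according to $\pi'$ then $\pi''$, while Theorem \ref{Kolev1} gives the Kolev polytope as
$$P_{\text{Kol}} = \check{x} + V\,[-\hat{p},\hat{p}] + [-\hat{l},\hat{l}],\qquad V = \check{H}B^0,\ \ \hat{l} = \hat{H}|B^0|\hat{p},$$
where $\check{H},\hat{H}$ are extracted from Theorem \ref{RohnThm}. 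Both sets are convex polytopes (they are affine images of boxes) and both are bounded thanks to the regularity conditions (\ref{rhoDelta}) and (\ref{strReg2}); they share the common center $\check{x}$.

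The natural attempt is the identity mapping $p' := p$. Under this choice, a generic point $\check{x}+Wp\in P_{\text{new}}$ belongs to $P_{\text{Kol}}$ if and only if one can pick $l = (W-V)p \in [-\hat{l},\hat{l}]$. Since $p$ ranges independently over $[-\hat{p},\hat{p}]$ and only $|p|\leq\hat{p}$ matters for the supremum, this reduces to verifying the componentwise matrix inequality
$$|W-V|\,\hat{p}\ \leq\ \hat{l}\ =\ \hat{H}|B^0|\hat{p}.$$
Because $\inum{l}\neq 0$ by hypothesis, there is genuine slack on the right-hand side for this inequality to absorb the mismatch between the two affine maps.

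The main obstacle is to prove the inequality $|W-V|\hat{p}\leq \hat{H}|B^0|\hat{p}$ by unpacking $W$ and $V$. For the $\pi''$-columns this is easy: $W_{\bullet k}=-Ca_k$ and $V_{\bullet k}=\check{H}B^0_{\bullet k}=\check{H}Ca_k$ (using $A_k=0$), so the difference is $(I+\check{H})Ca_k$, which is bounded by $\hat{H}|Ca_k|=\hat{H}|B^0_{\bullet k}|$ through the Rohn formulas of Theorem \ref{RohnThm} (for an $M$-matrix style $\inum{H}$, $|\check{H}+I|\leq \hat{H}$ holds entrywise because $\overline{h}_{jj}-\underline{h}_{jj}$ absorbs the center plus one). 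For the $\pi'$-columns I would split $W_{\bullet k}=CL_{\bullet k}|\inum{y}_k-t_k|$ and $V_{\bullet k}=\check{H}C(a_k-A_k\check{x})=\check{H}CL_{\bullet k}(t_k-R_{k\bullet}\check{x})$, and use the fact that $\inum{y}$ from (\ref{eqY}) is computed by a method satisfying (\ref{rhoDelta}), so that $|\inum{y}-t|$ can be compared entrywise with the radius bounds appearing in $\hat{l}$. The detailed bookkeeping of this comparison, making use of Theorem \ref{RohnThm} and of the definition $B^0=C(F-G)$, is the technically delicate step; once it is established, the containment $P_{\text{new}}\subseteq P_{\text{Kol}}$ follows at once by the witness $(p,(W-V)p)$.
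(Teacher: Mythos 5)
Your reduction of the containment to the componentwise inequality $|W-V|\hat{p}\leq\hat{l}$ is the step that fails, and it fails already on the paper's own Example 1 (system (\ref{pls-ex1})). There one reads off from (\ref{pl-ex1}) that $V_{\bullet 1}=(-\tfrac{27}{16},\tfrac{9}{16})^\top$, $V_{\bullet 2}=(-\tfrac{21}{64},\tfrac{21}{64})^\top$, $\hat{l}=(\tfrac{61}{96},\tfrac{137}{192})^\top\approx(0.64,0.71)^\top$, and from (\ref{p-ex1}) that $W_{\bullet 1}=(\tfrac32,-\tfrac12)^\top$, $W_{\bullet 2}=(\tfrac{11}{6},-\tfrac{11}{6})^\top$; with $\hat{p}=(\tfrac58,\tfrac12)^\top$ this gives $|W-V|\hat{p}\approx(3.07,1.74)^\top$, roughly five times larger than $\hat{l}$. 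The point is that $V=\check{H}B^0$ and $W=\left(CLD_{|\inum{y}-t|},-CF\right)$ are not close as matrices --- in this example the corresponding columns even have opposite signs --- because the two methods parameterize comparable zonotopes in genuinely different coordinates. Set containment of zonotopes does not require the generators to correspond under the identity on $p$, so the witness $\left(p,(W-V)p\right)$ simply does not land in $[-\hat{l},\hat{l}]$. Your auxiliary claim $|\check{H}+I|\leq\hat{H}$ is also false: by Theorem \ref{RohnThm}, $\check{H}\to I$ and $\hat{H}\to 0$ as $\Delta\to 0$, so the left-hand side tends to $2I$. (The valid Rohn-type bound is $|\check{H}-I|\leq\hat{H}$, which is what the choice $p'=-p$ would produce on the $\pi''$-block; but even allowing an arbitrary per-column sign choice $p'=D_\sigma p$, the $\pi'$-columns still violate the required bound in Example 1, e.g.\ $|\tfrac{11}{6}\mp\tfrac{21}{64}|\cdot\tfrac12>\tfrac{61}{96}$ for either sign.)

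The paper does not attempt any column-by-column comparison of the generator matrices. Its proof is a purely geometric/dimension-count argument: the new solution is the affine image of the box $\inum{p}$ alone, whose vertices form a subset of the vertices of the larger box $\left(\inum{p}^\top,\inum{l}^\top\right)^\top$, and the conclusion is drawn from how linear maps act on vertices of convex sets. If you want a witness-style proof, the witness must depend on $p$ in a non-affine way (equivalently, one should compare support functions $h(c)=\sum_i|c^\top U_{\bullet i}|\hat{q}_i$ direction by direction), not via a single linear substitution $p'=p$, $l=(W-V)p$.
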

\begin{proof} It follows from Corollary \ref{p-sol} and the assumptions of this theorem that the newly proposed parameterized
solution is function of less number of interval parameters. %
Since all vertices of the box $\inum{p}$ are vertices of the box
$\left(\inum{p}^\top, \inum{l}^\top\right)^\top$, the proof follows
from the properties of linear transformations, which transform the
vertices of a convex set into the same number of vertices of another
convex set. The inclusion will be more pronounced if $x(\inum{p},
\inum{l})\supset x(\inum{p})$.
\end{proof}

Our first example demonstrates Theorem \ref{inclusion} on a
parametric system for which the interval enclosures of the united
parametric solution set, obtained by the corresponding numerical
methods, are the same.

\begin{example}
Consider the interval parametric linear system
\begin{equation}\label{pls-ex1}
\begin{pmatrix}-1+\frac{1}{2}p_2 & -1 - \frac{1}{2}p_2 \\  -1 - p_2  &  -1 + p_2\end{pmatrix}x =
\begin{pmatrix}2+ p_2\\ -2p_2+3p_1\end{pmatrix}, \quad
p_1\in [-\frac{1}{4},1], p_2\in [\frac{1}{2}, \frac{3}{2}].
\end{equation}
For this system both conditions (\ref{rhoDelta}) and (\ref{strReg2})
are satisfied. Also, the two numerical methods (Theorem \ref{Kolev1}
and Theorem \ref{solution1}) yield the same interval vector
\begin{equation} \label{numX}
\inum{x} = \left([-\frac{17}{12}, \frac{55}{24}], [-\frac{27}{8},
-\frac{11}{12}]\right)^\top
\end{equation}
containing the united parametric solution set.

The  $p,l$-solution, obtained by Theorem \ref{Kolev1}, is
\begin{multline} \label{pl-ex1}
x'(p,l) = \begin{pmatrix}\frac{7}{16} \\
-\frac{103}{48}\end{pmatrix}
+ \begin{pmatrix}-\frac{27}{16}p_1 -\frac{21}{64}p_2 \\
\frac{9}{16} p_1 +
\frac{21}{64}p_2\end{pmatrix} + \begin{pmatrix}\frac{61}{96}l_1 \\
\frac{137}{192}l_2\end{pmatrix}, \\
p_1\in [-\frac{5}{8},\frac{5}{8}], p_2\in [-\frac{1}{2},
\frac{1}{2}], l_1,l_2\in [-1,1].
\end{multline}
Its numerical evaluation $x'(\inum{p},\inum{l})$ gives (\ref{numX}).

In order to obtain the newly proposed parameterized solution we
first obtain the equivalent form (\ref{plsG}) of the parametric
system (\ref{pls-ex1})
\begin{equation*}
(\check{A} + LD_{p_2}R)x = \check{a} + F(p_1) + LD_{p_2}t, \quad
p_1\in [-\frac{5}{8},\frac{5}{8}], p_2\in [-\frac{1}{2},
\frac{1}{2}],
\end{equation*}
where
\begin{multline*}
\check{A}=\check{p}_2\begin{pmatrix}\frac{1}{2} & - \frac{1}{2} \\
-1& 1\end{pmatrix}, \; L=\begin{pmatrix}\frac{1}{2}
\\ -1\end{pmatrix}, \; R=(1,-1), \; D_{p_2}=(p_2), \\
 \check{a} = \begin{pmatrix}2+ \check{p}_2\\ -2\check{p}_2+3\check{p}_1\end{pmatrix},\; F= \begin{pmatrix}0\\ 3\end{pmatrix}, \; t=(2).
\end{multline*}
The coefficient matrix of the parameter $p_2$ in (\ref{pls-ex1}) has
rank one. The interval parametric equation (\ref{eqY}) has the form
$$(1-p_2)y = \frac{31}{12} +2p_1-2p_2, \quad
p_1\in [-\frac{5}{8},\frac{5}{8}], p_2\in [-\frac{1}{2},
\frac{1}{2}]. $$ An interval enclosure of the solution set of the
last equation is
$$\inum{y} = [-\frac{1}{2}, \frac{17}{3}].$$
Then, by Corollary \ref{p-sol}, the parameterized solution is
\begin{equation} \label{p-ex1}
x''(p)= \begin{pmatrix}\frac{7}{16} \\ -\frac{103}{48}\end{pmatrix}
+
\begin{pmatrix}\frac{3}{2}p_1 +\frac{11}{6}p_2 \\ -\frac{1}{2} p_1 -
\frac{11}{6}p_2\end{pmatrix}, \quad p_1\in
[-\frac{5}{8},\frac{5}{8}], p_2\in [-\frac{1}{2}, \frac{1}{2}].
\end{equation}
Its interval evaluation $x''(\inum{p})$ gives also (\ref{numX}).
However, (\ref{p-ex1}) is a  $2$-polytope (in particular skew-box),
with a much smaller volume than the polytope of the $p,l$-solution
(\ref{pl-ex1}), both presented in Figure \ref{fig1}
\begin{figure}[ht]
\centerline{\includegraphics[scale=.8]{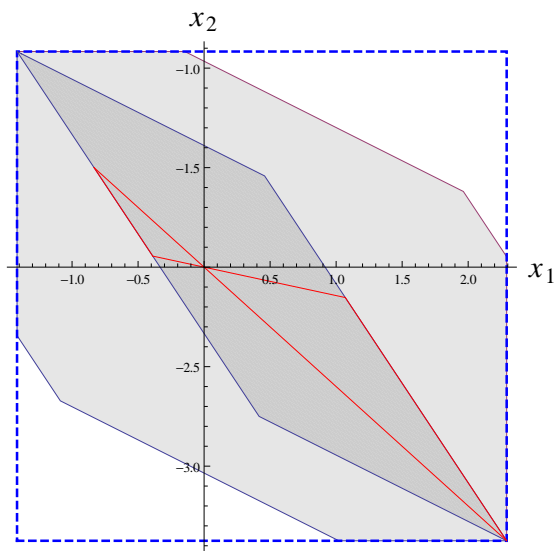}} % 0.8
\caption{The united parametric solution set of the system
(\ref{pls-ex1}) (the most inner butterfly region with red boundary),
its interval enclosure (\ref{numX}) (dashed line box), the
$p,l$-solution (light gray polytope) and the newly proposed
p-solution (dark gray polytope).}\label{fig1}
\end{figure}
\end{example}

\begin{example}
Consider the interval parametric linear system
\begin{multline}\label{pls-ex2}
\begin{pmatrix}-1+\frac{1}{2}p_2 -2p_3 & -1 - \frac{1}{2}p_2 \\  -1 - p_2  &  -1 + p_2\end{pmatrix}x =
\begin{pmatrix}p_2+3p_1-1\\ -2p_2+2p_1+3\end{pmatrix}, \\
p_1\in [-\frac{1}{4},1], \; p_2\in [\frac{1}{2}, \frac{3}{2}], \;
p_3\in [\frac{1}{5}, \frac{2}{3}].
\end{multline}
For this system both conditions (\ref{rhoDelta}) and (\ref{strReg2})
are satisfied. The method from Theorem \ref{Kolev1} yields interval
vector
$$\left([-5.725, 3.975], [-9.0805\ldots 5, 9.175]\right)^\top$$
and a parameterized solution enclosure
\begin{multline*}
x'(p,l)=\begin{pmatrix}-\frac{7}{8}\\\frac{17}{360}\end{pmatrix} +
\begin{pmatrix}-\frac{11881 p_1}{8400} + \frac{3124703 p_2}{1512000}\\-\frac{3969 p1}{2200} -\frac{168057 p_2}{44000}
+ \frac{1701 p_3}{880}\end{pmatrix} +
\begin{pmatrix}\frac{1108559}{378000}l_1\\
\frac{1116613}{198000}l_2\end{pmatrix}, \\
p_1\in [-\frac{5}{8},\frac{5}{8}], \; p_2\in [-\frac{1}{2},
\frac{1}{2}], \; l_1,l_2\in [-1,1].
\end{multline*}
The equivalent optimal rank one representation of system
(\ref{pls-ex2}) is obtained for
\begin{multline*}
L=\begin{pmatrix}1 &\frac{1}{2}\\ 0 &-1\end{pmatrix}, \; R=\begin{pmatrix}-2 &0\\ 1 &-1\end{pmatrix}, \; D_{p}=\begin{pmatrix}p_3 &0\\ 0 &p_2\end{pmatrix}, \\
 F= \begin{pmatrix}3\\ 2\end{pmatrix}, \; t=(0,2)^\top.
\end{multline*}
The interval parametric equation (\ref{eqY}) has the form
\begin{multline*}
\begin{pmatrix}1 &p_2\\ -\frac{2}{3}p_3 &1-\frac{58}{45}p_2 \end{pmatrix}y =
\begin{pmatrix}\frac{7}{4}-2p_1+2p_2\\ -\frac{83}{90} -\frac{4}{45}p_1 -\frac{116}{45}p_2\end{pmatrix},
\\
p_1\in [-\frac{5}{8},\frac{5}{8}], \; p_2\in [-\frac{1}{2},
\frac{1}{2}], \; p_3\in [-\frac{7}{30}, \frac{7}{30}].
\end{multline*}
An interval enclosure of the solution set of the last equation is
$$\inum{y} = \left([-5.7, 9.2], [-10.4, 77/9]\right)^\top.$$
Then, by Corollary \ref{p-sol}, the parameterized solution is
\begin{multline*} %\label{p-ex2}
x''(p)= \begin{pmatrix}-\frac{7}{8} \\ \frac{17}{360}\end{pmatrix} +
\begin{pmatrix}1 \\ \frac{49}{45}\end{pmatrix}p_1 +
\begin{pmatrix}0 & \frac{31}{5}\\
-\frac{92}{15} & -\frac{2201}{225}
\end{pmatrix}\begin{pmatrix}p_3\\ p_2\end{pmatrix}, \\
p_1\in [-\frac{5}{8},\frac{5}{8}], \; p_2\in [-\frac{1}{2},
\frac{1}{2}], \;  p_3\in [-\frac{7}{30}, \frac{7}{30}].
\end{multline*}
The two parameterized solutions and their interval hulls are
presented and compared in Figure \ref{fig2}.
\begin{figure}[ht]
\includegraphics[scale=.7]{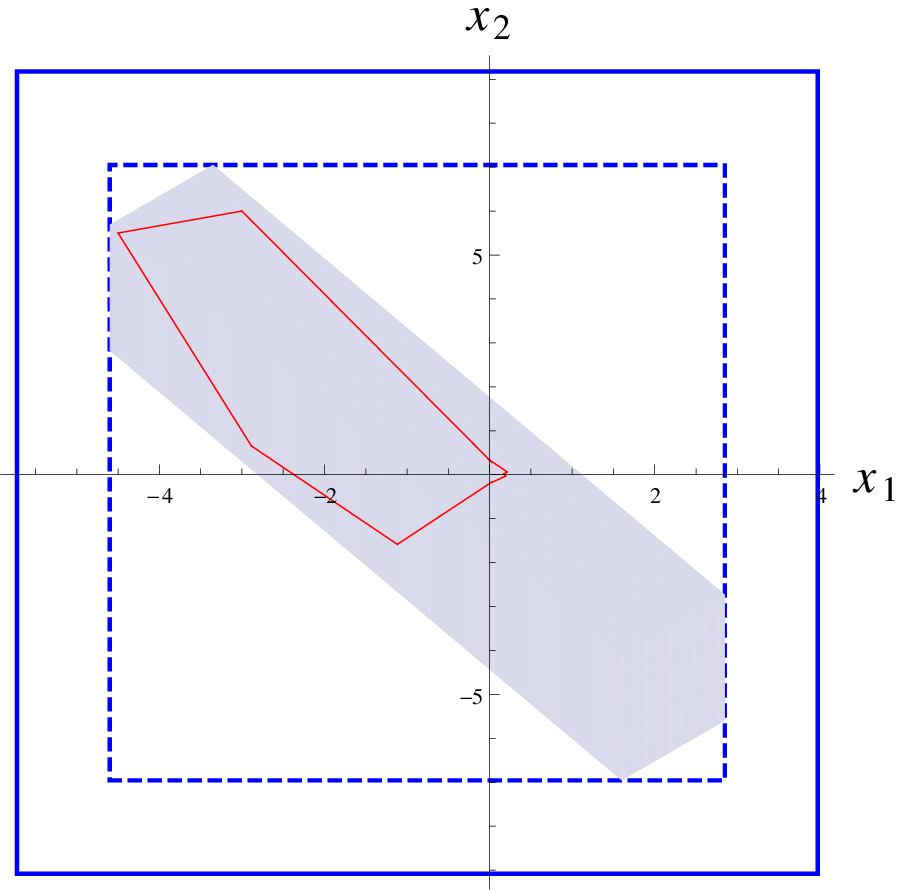}
\qquad
\includegraphics[scale=.7]{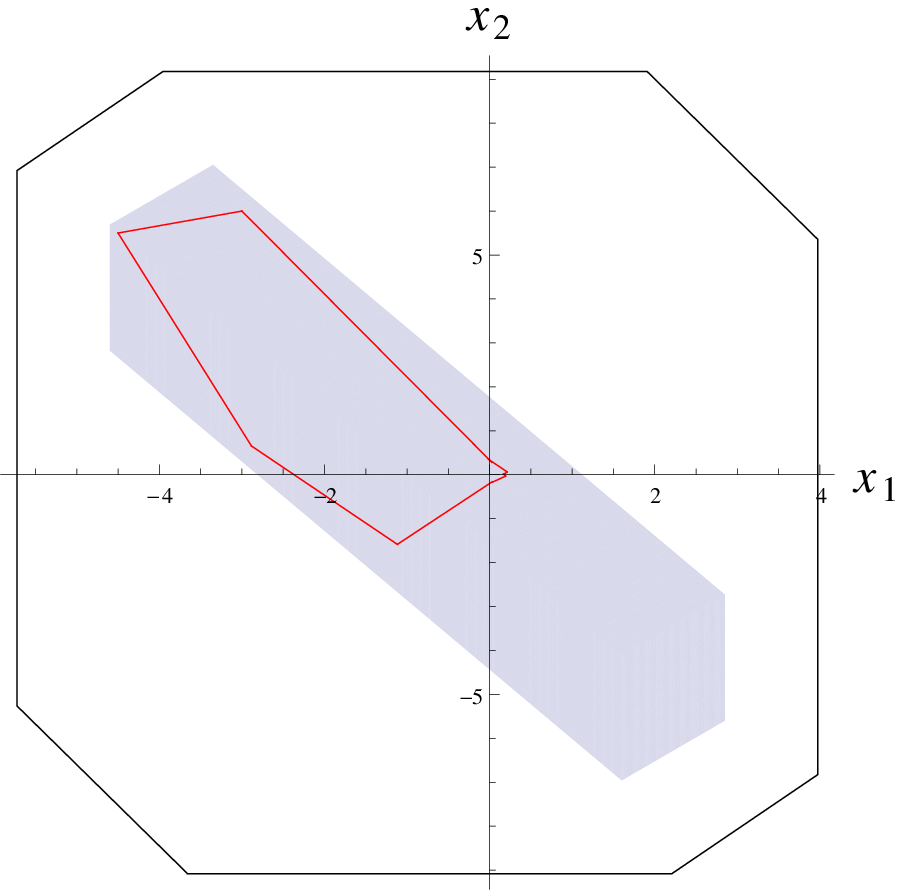}
\caption{Left: The united parametric  solution set of the system
(\ref{pls-ex2}) (the most inner region with red boundary), its
interval enclosure by Theorem \ref{solution1} (dashed line box), and
the interval enclosure by Theorem \ref{Kolev1} (the outer solid line
box). Right: The united parametric solution set of the system
(\ref{pls-ex2}) (the most inner region with red boundary), the
parameterized solution $x''(p)$ in gray and the parameterized
solution $x'(p,l)$ represented by its boundary.}\label{fig2}
\end{figure}
\end{example}

In general, when comparing the two parameterized solutions $x(p,l)$
and $x(p_{\pi''},g)$, one has to consider the two relations $K+n
\lesseqqgtr K+s-K_1$ and $x(\inum{p},\inum{l})\cong
x(\inum{p}_{\pi''},\inum{g})$, where $\sim\in\{\subset,\supset\}$.

\begin{example}\label{Ex3}
Consider the interval parametric linear system
\begin{multline}\label{pls-ex3}
\begin{pmatrix}\frac{1}{2}- p_2 & p_2 &2\\ p_2 & -p_2 & p_1\\
2 & p_1 & -2+p_1\end{pmatrix}x =
\begin{pmatrix}p_2\\ 2-p_1-p_2\\ p_1-1\end{pmatrix}, \\
p_1\in [\frac{2}{3},\frac{4}{3}], \; p_2\in [\frac{1}{2},
\frac{3}{2}].
\end{multline}
For this system both conditions (\ref{rhoDelta}) and (\ref{strReg2})
are satisfied. The interval hull of the united parametric solution
set, rounded outwardly and presented by $6$ digits in the mantissa,
is
$$\left([-.156997, .363637], [-.727273, .5972697], [.1896562, .4927185]\right)^\top.$$
The method from Theorem \ref{Kolev1} yields interval vector
$$\left([-.782941, .782941], [-1.014773, 1.6814392], [.082439, .584226]\right)^\top$$
and a parameterized solution enclosure
\begin{multline}\label{ex3'}
x'(p,l)=\begin{pmatrix}0\\\frac{1}{3}\\\frac{1}{3}\end{pmatrix} +
\begin{pmatrix}-0.38474 p_1 - 0.256493 p_2 \\
0.924365 p_1 + 0.728287 p_2\\ -0.392728 p_1 + 0.0374027
p_2\end{pmatrix} +
\begin{pmatrix}0.526447 l_1\\ 0.67584 l_2\\ 0.101283 l_3\end{pmatrix}, \\
p_1\in [-\frac{1}{3},\frac{1}{3}], \; p_2\in [-\frac{1}{2},
\frac{1}{2}], \; l_1,l_2,l_3\in [-1,1].
\end{multline}
The coefficient matrix of $p_2$ has rank one, while the coefficient
matrix of $p_1$ has rank two. Therefore, the equivalent optimal rank
one representation of system (\ref{pls-ex3}) is obtained for
\begin{multline*}
D_{p}=\begin{pmatrix}p_1 &0 &0\\ 0 &p_1 &0\\
0 &0 &p_2\end{pmatrix}, \; L=\begin{pmatrix}0 &0 &-1\\ 0 &1 &1\\
1 &1 &0\end{pmatrix}, \;
R=\begin{pmatrix}0 &1 &0\\ 0 &0 &1\\ 1 &-1 &0\end{pmatrix}, \\
 t=(2,-1,-1)^\top.
\end{multline*}
By Theorem \ref{pg-sol}, the parameterized solution is
\begin{multline}\label{ex3''}
x''(p,g)= \begin{pmatrix}0\\\frac{1}{3}\\\frac{1}{3}\end{pmatrix} +
\left(\check{A}^{-1}L\right)\begin{pmatrix}p_1 &0 &0\\ 0 &g &0\\
0 &0 &p_2\end{pmatrix} \begin{pmatrix}2.79556\\ 1.56338\\
2.249\end{pmatrix}, \\
 p_1,g\in [-\hat{p}_1,\hat{p}_1]={[}-\frac{1}{3},\frac{1}{3}{]}, \; p_2\in {[}-\frac{1}{2}, \frac{1}{2}{]}.
\end{multline}
Its interval evaluation $x''(\inum{p},\inum{g})$ is
$$\left([-1.032869, 1.032869], [-.795558, 1.462224], [.1032854, .5633813]\right)^\top.$$
For this example $x''(p,g)$ involves less number of interval
parameters than $x'(p,l)$, however $x_1''(\inum{p},\inum{g})\supset
x_1'(\inum{p},\inum{l})$. The latter disadvantage may vanish when
enclosing some secondary variables, see Example \ref{ex32}.
\end{example}

%%%%%%%%%%%%%%%%%%%%%%%%%%%%%%%%%%%%%%%%%%%%%%%%%%%%%%%%%%%%%%%
\section{Bounding secondary (derived) variables} \label{appl}

In this section we present a new application direction for the
parameterized solution enclosures and demonstrate the value of the
newly proposed parameterized solution. Before discussing practical
applications in structural mechanics, we make some general remarks.

\begin{remark}\label{rem1}
Since $x(\inum{p}_{\pi''}, \inum{g})$ in Theorem \ref{pg-sol} is
considered as a natural interval extension of the function $x(p)$,
$p\in [-\hat{p},\hat{p}]$ in (\ref{x(p')}), one has to be careful
when enclosing secondary variables $z(p, x(p_{\pi''}, g))$, where
$x(p_{\pi''}, g))$ is obtained by Theorem \ref{pg-sol}. In the
general case, the enclosure of the latter should be done by
computing the natural interval extension of $z(p, x(p))$, where
$x(p)$ is the function in (\ref{x(p')}), see Example \ref{kolevEx}.
\end{remark}

\begin{example}\label{kolevEx}
Consider a linear system which satisfies the requirements of
Corollary \ref{p-sol} and has a parameterized solution enclosure
\begin{equation}\label{pg-new}
x(p') = \check{x} + U p', \quad p'\in [-\hat{p}, \hat{p}],
\end{equation}
where $p'=(p_{\pi'}^\top, p_{\pi''}^\top)^\top$ and $U\in\R^{n\times
K}$ is the matrix that makes equivalent the above representation and
the representation (\ref{x(p)1}). With the additional requirements
$K=n$ and non-singularity of $U$, consider the secondary variables
\begin{equation}\label{z}
z(p') = U^{-1}x(p') - p', \quad  p'\in [-\hat{p}, \hat{p}].
\end{equation}
Replacing $x(p')$ form (\ref{pg-new}) in $z(p')$ we obtain
\begin{eqnarray}
z(p') &=& U^{-1}\check{x} + p' -p' \label{z1} \\
&=& U^{-1}\check{x}. \label{z2}
\end{eqnarray}
Due to the multiple occurrence of the parameters $p'$ in $z(p')$ and
according to Remark \ref{rem1}, interval enclosure of the variables
$z$ should be obtained as a natural interval extension of $z(p')$ in
(\ref{z}) or in (\ref{z1}) but not by $z(p')$ in (\ref{z2}), which
is not an interval function.
\end{example}

\begin{remark}
Let $z(p, x(p_{\pi''}, g))$, where $x(p_{\pi''}, g))$ is obtained by
Theorem \ref{pg-sol}, be $z(p, x(p_{\pi''}, g)) = B x(p_{\pi''}, g)
= B x(p)$, where $x(p)$ is the function in (\ref{x(p')}). Denote $V
= CLD_{|\inum{y}-t|}$. Then, the natural interval extension of
$$ B x(p) = B\check{x} + \left(BCF\right)p_{\pi''} + \left(BV\right)g(p_{\pi'})$$
is equal to the range of
$$B x(p_{\pi''}, g) = B\check{x} + \left(BCF\right)p_{\pi''} + \left(BV\right)g.$$
\end{remark}

\begin{example}\label{ex32}
Consider the system from Example \ref{Ex3} and the obtained two
parameterized solution enclosures $x'(p,l)$ and $x''(p,g)$. Find
interval enclosures of the secondary variables $z=B x$, where $x$ is
every one of the two parameterized solution enclosures, and
$B=\begin{pmatrix}1& 2& 3\\
3/2 & 1 & 2\\ 1/2 & 1/2 & 1\end{pmatrix}$. With $x'(p, l)$ in
(\ref{ex3'}) and the notation (\ref{x(p,l)}), we obtain
\begin{eqnarray*}
z\left(B, x'(p, l)\right) &=& B\tilde{x} + (BV)p' + (B\hat{l})l \\
&\in& \inum{z}' = \left([-1.2667226, 4.6000559], [-1.0233198,
3.02331978],
\right. \\
&& \hspace{2in} \left.  [-.38004820, 1.3800482]\right)^\top.
\end{eqnarray*}
With $x''(p, g)$ in (\ref{ex3''}) we obtain
\begin{multline*}
z\left(B, x''(p, g)\right) \in \inum{z}''=\left([-1.0222306,
4.3555640], \right.
\\ \left. [-.69090480, 2.6909048], [-.27465195,
1.2746520]\right)^\top.
\end{multline*}
The percentage by which $\inum{z}'$ overestimates\footnote{For
$\inum{x}'\supseteq \inum{x}''$, the percentage by which $\inum{x}'$
overestimates $\inum{x}''$ is $100(1- \hat{x}''/\hat{x}')$.}
$\inum{z}''$ is  $\left(8, 16, 12\right)^\top$\%.
\end{example}

In what follows we consider finite element (FE) models in linear
elastic structural mechanics involving interval uncertainties in
material and load parameters. The interval models are based on
classical interval arithmetic. While various methods and techniques
are devised for obtaining very sharp (even the exact) bounds for the
unknowns (e.g., displacements, called primary variables) of an
interval parametric linear system, obtaining sharp enclosure of the
so-called derived (secondary) variables (as axial forces, strains or
stresses) is referred in \cite{RaoMuMu11} as a challenging problem.
Secondary (derived) variables are functions of the primary variables
or of both primary variables and the initial interval model
parameters. Due to the dependency, the derived quantities are
obtained with significant overestimation. Some special techniques
are usually applied to decrease the overestimation in the secondary
quantities. In \cite{RaoMuMu11} a new mixed formulation of interval
finite element method (IFEM) is proposed, where both primary and
derived quantities of interest are involved as primary variables in
an expanded interval parametric  linear system. In this section we
propose an alternative approach based on the newly proposed
parameterized solution. The new approach requires that the interval
enclosure of the primary variables is obtained as a parameterized
solution. Thus, interval estimation of the secondary variables
reduces to range enclosure of the expressions representing secondary
variables as functions of the initial interval model parameters. In
formal notations the approach we propose, based on the new
parameterized solution of primary variables, is presented as
follows.

Let $A(p)u=a(p)$, $p\in\inum{p}$, be an interval parametric linear
system for the primary variables $u$ and $p\in\inum{p}$ be the
interval model parameters. For simplicity of the presentation we
assume that the coefficient matrices of all interval parameters have
rank one and the system for the primary variable can be solved by
Theorem \ref{solution1}. Depending on which material parameters are
considered as interval ones, an element secondary variable is
usually presented as $v = p_i b^\top u$, where $p_i$ is one of the
interval material parameters and $b$ is a numerical vector.
Algorithm \ref{algo} presents the new approach based on the newly
proposed parameterized enclosure of the displacements as primary
variables.

\begin{algorithm} \label{algo}
Interval enclosure of the secondary variable $v$ obtained by the new
parameterized enclosure (Corollary \ref{p-sol}) of the primary
variables $u$.
\medskip

\noindent {\bf Input}: \parbox[t]{4.7in}{numerical matrices
$A_0\in\R^{n\times n}$, $L,R^\top\in\R^{n\times K}$,
$F\in\R^{n\times (K-K_1)}$ and vectors $a_0\in\R^n$, $t\in\R^K$
providing an equivalent representation (\ref{plsG}); \\
vectors $b\in\R^n$ and $\inum{p}\in\IR^K$.}
\medskip

\noindent {\bf Output}: \parbox[t]{5in}{interval $\inum{v} = [v^-,
v^+]$ for the unknown secondary variable.}

\begin{enumerate}
\item {\bf Obtain} the new parameterized interval enclosure of the
primary variables by Corollary \ref{p-sol}
$$ u(p') = u_0 + U p', \quad p'\in [-\hat{p},\hat{p}], \qquad u_0\in\R^n, U\in\R^{n\times K}.$$
There is a flexibility in the implementation of this step of the
algorithm, which is discussed in \cite{PopovaED:2018}.

\item {\bf Generate} $\inum{p}'=[-\hat{p}, \hat{p}]$, \quad $\inum{v}' = \left(\check{p}_i+ \inum{p}_i'\right)\left(b^\top u_0 + (b^\top
U)\inum{p}'\right)$.

\item Since $v'(p')$ is a quadratic function of $p_i'$, $\inum{v}'$ may
overestimate the true range $\square\{v'(p')\mid p'\in\inum{p}'\}$.
To reduce the overestimation we may prove if this range is attained
at some endpoints of $\inum{p}_i'$. To this end we evaluate
$$
\frac{\partial v'(p')}{\partial p_i'} = b^\top u(p') + p_i b^\top
\frac{\partial u(p')}{\partial p_i'} \in \left(b^\top u_0 + (b^\top
U)\inum{p}'\right) + \inum{p}_i\left(b^\top U_{\bullet i}\right).
$$
{\bf Evaluate} $\inum{v}_1 = [v_1^-,v_1^+] = b^\top u_0 + (b^\top
U)\inum{p}'$ and $\inum{v}_2 = \inum{p}_i(b^\top U_{\bullet i})$.
\begin{enumerate}
\item[3.1]  {\bf If } $0\in v_1^-+\inum{v}_2$, {\bf then }
$v^-=(\inum{v}')^-$ \\
\hspace*{84pt} {\bf else} \; $s_1={\rm sign}(v_1^-+\inum{v}_2)\in\{-1,1\}$;\\
\hspace*{116pt} $(\inum{p}')_i = -s_1\hat{p}_i$; \\
\hspace*{116pt} $v^- = (\check{p}_i -s_1\hat{p}_i)(b^\top u_0 +
(b^\top U)\inum{p}')$;

\item[3.2] {\bf If } $0\in v_1^++\inum{v}_2$, {\bf then }
$v^+=(\inum{v}')^+$ \\
\hspace*{84pt} {\bf else} \; $s_2={\rm
sign}(v_1^++\inum{v}_2)\in\{-1,1\}$; \\
\hspace*{116pt} $(\inum{p}')_i = s_2\hat{p}_i$; \\
\hspace*{116pt} $v^+ = (\check{p}_i +s_1\hat{p}_i)(b^\top u_0 +
(b^\top U)\inum{p}')$;
\end{enumerate}

\item {\bf Return} $\inum{v}=[v^-, v^+]$.
\end{enumerate}
\end{algorithm}

\begin{lemma} \label{lemma}
Let $f(x):\inum{x}\subset\IR^m\rightarrow\IR^n$ be an interval
function such that each component
$f_i(x):\inum{x}\subset\IR^m\rightarrow\IR$ is a linear function of
all interval variables except one $x_{j_i}$ and the interval
variables $x_i$, $i\neq j_i$ appear only once in the expression of
$f_i(x)$. Then, for every $\tilde{x}\in\inum{x}$ and every $1\leq
i\leq n$ there exist $x_{j_i}', x_{j_i}''\in\inum{x}_{j_i}$ such
that
\begin{equation}\label{enclosure}
f_i(\inum{x}') \leq f_i(\tilde{x}) \leq f_i(\inum{x}''),
\end{equation}
where $\inum{x}'_i = \inum{x}_i'' = \inum{x}_i$, $i\neq j_i$,
$\inum{x}'_{j_i} = x_{j_i}'$, $\inum{x}''_{j_i} = x_{j_i}''$.
\end{lemma}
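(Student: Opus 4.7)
The plan is to exploit the single-occurrence affine structure of $f_i$ in the variables $x_k$ with $k\neq j_i$, for which the natural interval extension is known to be exact, and then observe that the slice of $\inum{x}$ through $\tilde{x}_{j_i}$ already contains $f_i(\tilde{x})$.

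First I would write out the structural decomposition. Because $f_i$ is linear in each $x_k$ with $k\neq j_i$ and each such variable appears only once in its expression, one can write
$$
f_i(x) \;=\; G_i(x_{j_i}) \;+\; \sum_{k\neq j_i} H_{i,k}(x_{j_i})\, x_k,
$$
where $G_i$ and the $H_{i,k}$ are univariate functions of $x_{j_i}$. For any fixed $t\in\inum{x}_{j_i}$ the restriction $f_i(t,\cdot)$ is an affine single-occurrence expression in the remaining $m-1$ variables, so its natural interval extension over $\inum{x}_{-j_i}$ equals its exact range on the slice $\{x_{j_i}=t\}\cap\inum{x}$. Consequently the interval evaluation $f_i(\inum{x}')$ appearing in the lemma is precisely the image $\{f_i(x'_{j_i},x_{-j_i}):x_{-j_i}\in\inum{x}_{-j_i}\}$, and analogously for $f_i(\inum{x}'')$. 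The slice-endpoint functions $\phi^-(t):=\min f_i(\inum{x}_{[j_i\to t]})$ and $\phi^+(t):=\max f_i(\inum{x}_{[j_i\to t]})$ are continuous on $\inum{x}_{j_i}$: each summand contributes either $H_{i,k}(t)\,a_k$ or $H_{i,k}(t)\,b_k$ with $\inum{x}_k=[a_k,b_k]$ depending on the sign of $H_{i,k}(t)$, and the two expressions coincide when $H_{i,k}(t)=0$, so no jump is introduced across sign changes.

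Finally I would produce the required values. Because $\tilde{x}\in\inum{x}$, the point $f_i(\tilde{x})$ already lies in $[\phi^-(\tilde{x}_{j_i}),\phi^+(\tilde{x}_{j_i})]$, and so the choice $x'_{j_i}=x''_{j_i}=\tilde{x}_{j_i}$ makes both slices pass through $\tilde{x}$ and both intervals $f_i(\inum{x}')$, $f_i(\inum{x}'')$ contain $f_i(\tilde{x})$. This yields the sandwich $f_i(\inum{x}')\le f_i(\tilde{x})\le f_i(\inum{x}'')$ under the interval convention ``$[a,b]\le c\Leftrightarrow a\le c$'' and ``$c\le[a,b]\Leftrightarrow c\le b$'' that is consistent with the way Algorithm~\ref{algo} later exploits the lemma. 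The main obstacle, as I see it, is not the mathematical argument but pinning down the precise convention used in the conclusion: if a stronger reading is intended, for instance forcing $x'_{j_i},x''_{j_i}$ to be endpoints of $\inum{x}_{j_i}$, then the continuity of $\phi^\pm$ combined with the intermediate value theorem on the connected set $\inum{x}_{j_i}$ supplies the required points, with the sign-change behaviour of the $H_{i,k}$ being the only technicality requiring separate bookkeeping.
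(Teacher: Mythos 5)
Your argument is correct, but note that the paper states Lemma~\ref{lemma} without giving any proof at all, so there is no proof of record to compare against; what you supply is essentially the canonical argument. The decisive step is the one you isolate: once $x_{j_i}$ is frozen at $\tilde{x}_{j_i}$, the remaining expression is affine with single occurrences of each variable, so $f_i(\inum{x}')$ is the exact range of $f_i$ on the slice $\{x_{j_i}=\tilde{x}_{j_i}\}\cap\inum{x}$ (and in any case contains $f_i(\tilde{x})$ by inclusion isotonicity), whence $x_{j_i}'=x_{j_i}''=\tilde{x}_{j_i}$ already yields (\ref{enclosure}). You are also right to flag the convention: under the weak reading $\inum{a}\leq c\Leftrightarrow a^-\leq c$ and $c\leq\inum{a}\Leftrightarrow c\leq a^+$ the lemma holds, whereas under a strict componentwise reading it would be false (take $f(x_1,x_2)=x_1^2+x_2$ on $[0,1]^2$ with the exceptional variable $x_1$ and $\tilde{x}=(0,0)$: no slice has its entire range below $0$); the weak reading is the one consistent with how Theorem~\ref{algoProof} uses the lemma, namely that the endpoints of the true range are the extrema over $t$ of the slice evaluations. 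Two minor points: the continuity of $\phi^{\pm}$ and the intermediate value theorem are not needed for the lemma as stated, since your choice of the slice through $\tilde{x}$ settles it; and for the strengthened version in which $x_{j_i}',x_{j_i}''$ are chosen independently of $\tilde{x}$ you would invoke the extreme value theorem (attainment of $\min_t\phi^-(t)$ on the compact interval $\inum{x}_{j_i}$) rather than the intermediate value theorem, while forcing these points to be endpoints of $\inum{x}_{j_i}$ requires exactly the derivative-sign test performed in steps 3.1--3.2 of Algorithm~\ref{algo}.
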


\begin{theorem} \label{algoProof}
Algorithm \ref{algo} provides interval enclosure of a secondary
variable $v$ with quality, which is not worse than the quality of
the enclosure of primary variables $u$ obtained by Corollary
\ref{p-sol} (Theorem \ref{solution1}) if steps 3.1 and 3.2 go to
their else-clause.
\end{theorem}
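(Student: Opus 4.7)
The plan is to write the secondary variable as the explicit function
$$v(p') = (\check{p}_i + p_i')\bigl(b^\top u_0 + (b^\top U) p'\bigr), \qquad p'\in [-\hat{p},\hat{p}],$$
and to show that whenever the two tests in the algorithm fall through to the else-clauses, the computed values $v^{-}$ and $v^{+}$ coincide with the exact bounds of $v(p')$ over $\inum{p}'$. Since the $u$-enclosure produced by Corollary~\ref{p-sol} is already, in the same sense, the exact range of the affine function $u(p')=u_0+Up'$ over $\inum{p}'$, this equality of enclosures with their respective ranges gives the asserted ``quality not worse than'' statement. I would therefore split the proof into a part about the structural (linear, single-occurrence) dependence in the variables $p_j'$, $j\neq i$, and a part about monotonicity in the single multiply-occurring variable $p_i'$.

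For the structural part, I would point out that in $v(p')$ each $p_j'$ with $j\neq i$ appears exactly once, while $p_i'$ appears in two places. Consequently, once $p_i'$ is fixed at any constant $c\in\inum{p}_i'$, the residual expression $(\check{p}_i+c)(b^\top u_0 + (b^\top U)\inum{p}'|_{p_i'=c})$ is a scalar times a linear form with single-occurrence interval variables, so its natural interval extension is sharp. This is exactly the evaluation pattern carried out in the else-clauses of steps 3.1 and 3.2 with $c=\pm\hat{p}_i$, and it is the same exactness mechanism that makes the $u$-enclosure tight. Lemma~\ref{lemma} can then be invoked to guarantee the existence of a value of $p_i'$ in $\inum{p}_i'$ at which such a fixed-$p_i'$ evaluation attains the bounds of $v$, and the remaining task is to show that in the else-branches this value is precisely an endpoint of $\inum{p}_i'$.

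For the monotonicity part, I would analyze the partial derivative
$$\frac{\partial v}{\partial p_i'}(p') = b^\top u(p') + p_i\,(b^\top U_{\bullet i}),$$
whose range is enclosed by $\inum{v}_1+\inum{v}_2$. The condition $0\notin v_1^{-}+\inum{v}_2$ in step 3.1 (resp.\ $0\notin v_1^{+}+\inum{v}_2$ in step 3.2) fixes a sign $s_1$ (resp.\ $s_2$) for the derivative at the configuration where $b^\top u(p')$ attains its lower (resp.\ upper) bound; I would then argue that this sign determines the monotonic behaviour of $v$ in $p_i'$ at the optimizing configuration, placing the minimum at $p_i'=-s_1\hat{p}_i$ and the maximum at $p_i'=s_2\hat{p}_i$. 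Combining with the first part, the algorithm's formulas in the else-clauses reproduce the exact bounds of $v(p')$ over $\inum{p}'$, which matches the quality of the $u$-enclosure.

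The main obstacle is the bridge in the monotonicity part: the algorithm tests the sign of only $v_1^{-}+\inum{v}_2$ and $v_1^{+}+\inum{v}_2$ rather than of the full $\inum{v}_1+\inum{v}_2$, and I must justify that these one-sided tests actually control the sign of $\partial v/\partial p_i'$ at the minimizing and maximizing configurations respectively. I expect to handle this by exploiting the bilinear product structure of $v=p_i\cdot(b^\top u)$, combined with Lemma~\ref{lemma}, to reduce the analysis to comparing the two corner evaluations of $b^\top u$; the linearity of $u(p')$ in each $p_j'$, $j\neq i$, should make the argument purely endpoint-based and let the sign information propagate to the global extremum.
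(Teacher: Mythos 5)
Your proposal follows essentially the same route as the paper's proof: the natural interval extension of $v'(p')=(\check{p}_i+p_i')(b^\top u_0+(b^\top U)p')$ gives soundness, Lemma~\ref{lemma} together with the derivative sign tests is used to place the extrema of $v'(p')$ at endpoints of $\inum{p}_i'$, and the resulting (exact) range of $v'(p')$ then inherits its overestimation solely from the enclosure of the primary variables. The obstacle you flag --- justifying the one-sided tests on $v_1^-+\inum{v}_2$ and $v_1^++\inum{v}_2$ rather than on the full derivative enclosure $\inum{v}_1+\inum{v}_2$ --- is genuine, but note that the paper's own proof does not close it either: it merely asserts that the signs of these intervals ``determine'' endpoint attainment and then proceeds under the assumption that step~3 returns the true range of $v'(p')$, so your plan is, if anything, more ambitious than the published argument.
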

\begin{proof}
It is obvious that the natural interval extension
$\inum{v}'=v'(\inum{p}')$ of $v'(p')=$ \linebreak
$\left(\check{p}_i+ p_i'\right)\left(b^\top u_0 + (b^\top
U)p'\right)$ contains the true range of the secondary variable $v$.
Basing on Lemma \ref{lemma}, steps 3.1 and 3.2 of the algorithm try
to prove if some endpoint of $\inum{p}_i'$ generates the lower,
respectively the upper, endpoint of the true range of $v'(p')$.
Then, the sign of the intervals $v_1^-+\inum{v}_2$, respectively
$v_1^++\inum{v}_2$, will determine that. Let step $3$ of the
algorithm give the true range of $v'(p')$ on $p'\in\inum{p}'$, that
is $[v^-,v^+]=\square\{v'(p')\mid p'\in\inum{p}'\}$. We have that
$v'(p')$ is a function of $u(p')$ and $u(\inum{p}')$ is an outer
interval enclosure of $u(p')$, $p'\in\inum{p}'$, obtained by some
numerical method. Therefore, $[v^-,v^+]$ is an outer interval
enclosure of the secondary variable $v$ and the quality of this
enclosure depends of the quality of the enclosure $u(\inum{p}')$.
\end{proof}

For enclosing the derived variables considered in IFEMs of
mechanical structures it is essential that the parameterized
representation of the primary variables involves only the initial
interval parameters and their number is less than the number of
interval parameters in the parameterized solutions $x(p,l)$. In
Section \ref{ex6bar}, the parameterized solution obtained by Theorem
\ref{Kolev1} is representative of all parameterized solutions
$x(p,l)$ (\cite{Kol2014}--\cite{SkalnaHla19}) which satisfy Theorem
\ref{inclusion}.

\subsection{Truss Example 1}  \label{ex6bar}
Consider a $6$-bar truss structure as presented in Fig.
\ref{fig-6bar}  after \cite{QiuElish}. The structure consists of $6$
elements. The crisp values of the parameters of the truss are
presented in Table \ref{6bar-crispPars}.

\begin{figure}[h]
\centerline{\includegraphics[scale=.35]{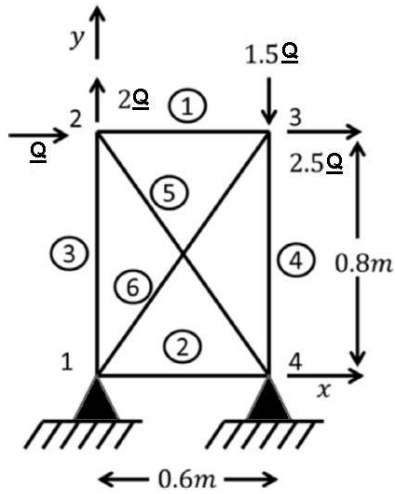}} \caption{A
$6$-bar truss structure after \cite{QiuElish}.}\label{fig-6bar}
\end{figure}

\begin{table}[h]
\begin{center}
\begin{tabular}{ll}
\hline Parameter & Value \\ \hline %
Modulus of elasticity for all elements \qquad \phantom{.} & \\ %
 $E_i$, $i=1,\ldots,6$ ($kN/m^2$) & $2.1\times 10^8$ \\ %
Cross sectional area &  \\ %
$A_1, A_2, A_3, A_4$ ($m^2$) & $1.0\times 10^{-3}$ \\ %
Cross sectional area &  \\ %
$A_5, A_6$ ($m^2$) & $1.05\times 10^{-3}$ \\[4pt] %
Load $Q$ ($kN$) & $20.5$ \\[4pt] %
Length of the first and second element &  \\ %
$L_1, L_2$ ($m$) & $0.6$ \\ %
Length of the third and fourth element &  \\ %
$L_3, L_4$ ($m$) & $0.8$ \\ %
Length of the fifth and sixth element &  \\ %
$L_6, L_6$ ($m$) & $1$ \\ %
\hline
\end{tabular}
\end{center}
\caption{Crisp values of the parameters for the  $6$-bar truss
structure.} \label{6bar-crispPars}
\end{table}

The traditional finite element method (FEM) for this structure leads
to a linear system
$$K(E,A,L)u = f(Q),$$
where $K(E,A,L)$ is the reduced stiffness matrix depending on the
structural parameters (modulus of elasticity $E$, cross sectional
area $A$, length $L$) for each element, $f(Q)$ is the load vector
and $u$ is the displacement vector. Namely,
\begin{multline} \label{Kfu}
K(E,A,L) \;= \\
\begin{pmatrix}\frac{E_1A_1}{L_1}+0.36\frac{E_5A_5}{L_5} & -0.48 \frac{E_5A_5}{L_5} & -\frac{E_1A_1}{L_1} & 0\\
-0.48 \frac{E_5A_5}{L_5} & \frac{E_3A_3}{L_3}+0.64\frac{E_5A_5}{L_5} & 0 & 0 \\ %
-\frac{E_1A_1}{L_1} &  0 & \frac{E_1A_1}{L_1}+0.36\frac{E_6A_6}{L_6} &  0.48\frac{E_6A_6}{L_6}\\ %
0 & 0 & 0.48\frac{E_6A_6}{L_6} & \frac{E_4A_4}{L_4}+0.64\frac{E_6A_6}{L_6} %
\end{pmatrix}, \\[6pt]
f(Q) \; = \; (Q, 2Q, 2.5Q, -1.5Q)^\top, \qquad\qquad %
u = (ux_2, uy_2, ux_3, uy_3)^\top.
\end{multline}

Let the load parameter $Q$ be unknown-but-bounded in the interval
$\inum{Q}=[20, 21] kN$ and the cross sectional areas $A_5$, $A_6$ be
also uncertain varying in the intervals $[1.008, 1.092]\times
10^{-3}$ $m^2$, $[1,1.1]\times 10^{-3}$ $m^2$, respectively. %
The aim is to obtain interval enclosure for the displacements (as
primary variables depending on interval model parameters) and for
the  element axial forces (as secondary variables). Axial forces are
quantities of practical interest in design. For the considered
example, the global force vector $F=(F_{e_1}, F_{e_3}, F_{e_4},
F_{e_5}, F_{e_6})^\top$ is determined by $F = D_vTu$, where
$F_{e_i}$ are the corresponding element\footnote{Finite elements are
denoted by $e_i$.} forces and
$$
v=\left(\frac{E_1A_1}{L_1}, \frac{E_3A_3}{L_3}, \frac{E_4A_4}{L_4},
\frac{E_5A_5}{L_5}, \frac{E_6A_6}{L_6}\right)^\top,  \quad T
=\begin{pmatrix}-1 & 0 &1 & 0\\0 &1 &0 &0\\0 &0 &0 &1\\-\frac{6}{10}
&\frac{8}{10} &0 &0\\0 &0 & \frac{8}{10} &\frac{6}{10}
\end{pmatrix}.
$$
Above, the displacements $u=u(A_5,A_6,Q)$ (as primary variables),
the vector $v=v(A_5,A_6)$ and the secondary variables -- element
axial forces $F_{e_i}$, $i=1,3,4,5,6$  -- are functions of the
interval model parameters $A_5,A_6,Q$.

First, we find interval enclosures for the displacements as
parameterized solutions to the interval parametric linear system
$K(A_5,A_6)u = f(Q)$. Applying Theorem \ref{Kolev1} we obtain
\begin{multline*}
10^4 u'(A_5,A_6,Q, l) \approx \\ \begin{pmatrix} %
8.5846 -2153.0A_5 - 2134.2A_6 +  .41896Q + 10^{-3}26.693 l_1\\ %
3.2669 + 491.791A_5 - 559.409A_6 + .15937Q + 10^{-3} 6.6039 l_2 \\ %
8.9579 -1876.6 A_5 - 2449.5 A_6 + .43727Q + 10^{-3} 26.952 l_3\\ %
-3.1109 + 491.80 A_5 - 559.420A_6 + .15176Q + 10^{-3} 6.6017 l_4 % \\ %
\end{pmatrix},
\end{multline*}
where $A_i\in [-\hat{A}_i, \hat{A}_i]$, $i=5,6$, $Q\in [-\hat{Q},
\hat{Q}]$, $l_i\in [-1,1]$, $i=1,\ldots,4$. Its interval hull is
\begin{multline*}
10^4 u'(\inum{A}_5,\inum{A}_6,\inum{Q}, \inum{l})  \subset \\
\left([8.151, 9.018], [3.131, 3.402], [8.511, 9.405], [-3.242,
-2.979]\right)^\top.
\end{multline*}

With the crisp values from Table \ref{6bar-crispPars}, the optimal
equivalent rank one representation of the system (\ref{Kfu}) is
$(A_0+LD_gR)u = f(Q)$, where $g=(A_5,A_6)^\top$ and
$$
10^{-5}A_0=\begin{pmatrix}\frac{7}{2}&0&-\frac{7}{2}&0\\
0&\frac{21}{8}&0&0\\-\frac{7}{2}&0&\frac{7}{2}&0\\0&0&0&\frac{21}{8}\end{pmatrix},
\;
10^{-5}L=\begin{pmatrix}756&0\\-1008&0\\0&756\\0&1008\end{pmatrix},\;
R^\top=\begin{pmatrix}1&0\\-\frac{4}{3}&0\\
0&1\\0&\frac{4}{3}\end{pmatrix}.
$$
The application of Corollary \ref{p-sol} to the above system yields
the parameterized solution
\begin{multline*}
10^4 u''(A_5,A_6,Q) \approx \begin{pmatrix} %
8.5846 + 2306.60 A_5 + 2285.45 A_6 - .41876 Q \\ %
3.2669 - 527.104 A_5 + 599.307 A_6 - .15936 Q \\ %
8.9579 + 2010.11 A_5 + 2622.56 A_6 - .43697 Q \\%
-3.1109 -527.104 A_5 + 599.307 A_6 + .15175 Q
\end{pmatrix},
\end{multline*}
where $A_i\in [-\hat{A}_i, \hat{A}_i]$, $i=5,6$, $Q\in [-\hat{Q},
\hat{Q}]$. Its interval hull is
\begin{multline*}10^4 u''(\inum{A}_5,\inum{A}_6,\inum{Q}) \subset
\left([8.164, 9.006], [3.135, 3.399], \right. \\ \left. [8.523,
9.392], [-3.239, -2.982]\right)^\top.
\end{multline*}
It is readily seen that $u''(\inum{A}_5,\inum{A}_6,\inum{Q})$
provides sharper interval enclosure to the displacements than
$u'(\inum{A}_5,\inum{A}_6,\inum{Q},\inum{l})$. Percentage by which
the latter overestimates the former is $(2.95, 2.32, 2.87,
2.39)^\top$. This implies that the newly proposed parameterized
solution $u''(A_5,A_6,Q)$ will provide a sharper enclosure of the
element axial forces.

For the particular example we have
\begin{eqnarray*}
10^{-5}D_vT &=& 10^{-5}D_{v'}T' \\
&=& 10^{-5}D_{v'}\begin{pmatrix}-\frac{7}{2}&0&\frac{7}{2}&0\\ 0&\frac{21}{8}&0&0\\
0&0&0&\frac{21}{8}\\
-1260 & 1680 & 0&0\\
 0 &0 &1680  & 1260
\end{pmatrix}, \; v'=\begin{pmatrix}1\\1\\1\\A_5\\A_6\end{pmatrix},
\end{eqnarray*}
which shows that the element axial forces  $F_{e_i}$, $i=1,3,4$, are
linear functions of the interval model parameters $A_5,A_6,Q$, while
the axial forces  $F_{e_5}, F_{e_6}$ are quadratic functions of the
interval parameters $A_5,A_6$, respectively. In what follows, in
particular Table \ref{forceCompar}, we use the notation
\begin{equation*}
\begin{split} & u'(A_5,A_6,Q,l) = x'_0 +B'(A_5,A_6,Q)^\top + C'l,
\quad
l\in\inum{l}=([-1,1],\ldots,[-1,1])^\top, \\
&u''(A_5,A_6,Q) = x''_0 +B''(A_5,A_6)^\top + C''(Q),\\
& \inum{u}'= u'(\inum{A}_5,\inum{A}_6,\inum{Q},\inum{l}), \qquad
\inum{u}''= u''(\inum{A}_5,\inum{A}_6,\inum{Q}),\\
& \inum{F}' = D_{\inum{v}'}(T'\inum{u}'), \qquad \inum{F}'' =
D_{\inum{v}'}(T'\inum{u}''), \qquad\qquad [\mp\hat{a}] = [-\hat{a},\hat{a}], \\
& F'(\inum{A}_5,\inum{A}_6,\inum{Q},\inum{l}) =
D_{\inum{v}'}\left(T'x'_0 +
(T'B')([\mp\hat{A}_5], [\mp\hat{A}_6], [\mp\hat{Q}])^\top + T'\inum{l}\right), \\ %
& F''(\inum{A}_5,\inum{A}_6,\inum{Q}) = D_{\inum{v}'}\left(T'x''_0
 + (T'B'')([\mp\hat{A}_5], [\mp\hat{A}_6])^\top +
 (T'C'')([\mp\hat{Q}])\right),
\end{split}
\end{equation*}
where $x'_0$, $B'$, $C'$ are numerical vector and matrices presented
in $u'(A_5,A_6,Q,l)$ above, and  $x''_0$, $B''$, $C''$ are numerical
vector and matrices presented in the numerical expression of
$u''(A_5,A_6,Q,l)$ above. Table \ref{forceCompar} presents and
compares interval enclosures of the element axial forces
$\inum{F}',\inum{F}''$, obtained via direct interval computation,
and the enclosures $F'(\inum{A}_5,\inum{A}_6,\inum{Q},\inum{l})$,
$F''(\inum{A}_5,\inum{A}_6,\inum{Q})$, obtained via the two kinds
parameterized solutions $u'(A_5,A_6,Q,l)$ and $u''(A_5,A_6,Q)$,
respectively.
\begin{table}[h]
\begin{center}
\begin{tabular}{llll}
\hline
 & $\inum{F}'=$ & $\inum{F}''$ & $F''(\inum{A}_5,\inum{A}_6,\inum{Q})$ \\
 & $F'(\inum{A}_5,\inum{A}_6,\inum{Q},\inum{l})$ & \\
\hline $e_1$ & [-17.740, 43.875] & [-16.843, 42.978]& [11.722,
14.412] \\
$e_3$ & [82.215, 89.298] & [82.297, 89.216] &[82.297, 89.216]  \\
$e_4$ & [-85.102, -78.218] & [-85.020, -78.300]&[-85.019, -78.300]
\\
$e_5$ & [-66.621, -45.919] & [-66.388, -46.135]& [-62.365, -49.848]
\\
$e_6$ & [102.13, 132.51] & [102.39, 132.23] & [104.86, 129.51] \\
\end{tabular}
\end{center}
\caption{Interval enclosures for the element axial forces in the
$6$-bar truss structure obtained via the two kinds of parameterized
solutions.} \label{forceCompar}
\end{table}

Due to $\inum{u}''\subset\inum{u}'$, it is clear that
$\inum{F}''\subset\inum{F}'$ and the latter overestimation is $(2.9,
2.3, 2.4, 2.2, 1.8)^\top$\%. Note that the enclosures $\inum{F}',
\inum{F}''$ are so bad that the sign of $F_{e_1}$ cannot be
determined. Note also that
$F'(\inum{A}_5,\inum{A}_6,\inum{Q},\inum{l})=\inum{F}'$. The latter
means that Kolev-style parameterized solution was not able to
improve the bounds $\inum{F}'$. Intervals $\inum{F}''$ overestimate
intervals $F''(\inum{A}_5,\inum{A}_6,\inum{Q})$ by $(95.5, 0, 0,
38.2, 17.4)^\top$\%, respectively. Since $F_{e_i}(A_5,A_6,Q)$,
$i=5,6$, are quadratic polynomials of the interval parameters $A_5$,
$A_6$, (represented by a $v'$ in Algorithm \ref{algo}) respectively,
their interval values presented in Table \ref{forceCompar}, in
general, may not be equal to the corresponding ranges. Evaluating
partial derivatives as presented in Algorithm \ref{algo}, we prove
that $F_{e_5}''(A_5,A_6,Q)$ is monotonic decreasing on $A_5$, while
$F_{e_6}''(A_5,A_6,Q)$ is monotonic increasing on $A_6$. Thus
$F''(\inum{A}_5,\inum{A}_6,\inum{Q})$ presented in Table
\ref{forceCompar} are the exact ranges of the corresponding
expressions of $v'$ and the quality of the enclosures
$F''(\inum{A}_5,\inum{A}_6,\inum{Q})$  is the same as the quality of
the enclosures $\inum{u}''$. Note that neither $\inum{u}''$ nor
$F''(\inum{A}_5,\inum{A}_6,\inum{Q})$ are the exact ranges of the
corresponding unknowns, see the proof of Theorem \ref{algoProof}. In
order to demonstrate the quality of the enclosures
$F''(\inum{A}_5,\inum{A}_6,\inum{Q})$ we give below the
corresponding exact ranges rounded outwardly
\begin{multline*}
F \in \left([11.8215, 14.3755],[82.4287, 89.1673],[-84.9499,
-78.4121],\right. \\
\left. [-58.9591, -53.0358],[109.960, 123.970]\right)^\top.
\end{multline*}

\subsection{Truss Example 2}

Consider a finite element model of a one-bay $20$-floor truss
cantilever presented in Fig. \ref{bench2}, after \cite{Muhanna04}.
\begin{figure}[ht]
\centerline{\includegraphics[height=2.5in]{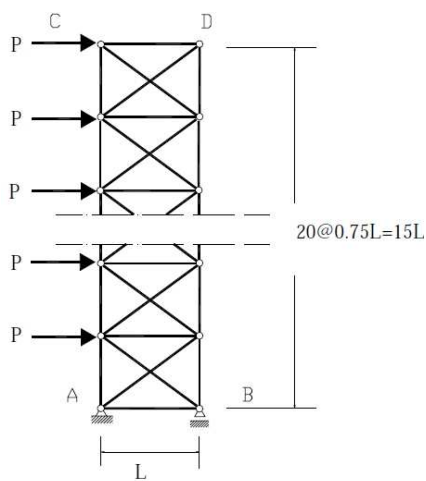}}
\caption{One-bay $20$-floor truss cantilever after
\cite{Muhanna04}.} \label{bench2}
\end{figure}
The structure consists of 42 nodes and 101 elements. The bay is $L =
1$m, every floor is $0.75 L$, the element cross-sectional area is $A
= 0.01$ m$^2$, and the crisp value for the element Young modulus is
$E = 2\times 10^8$kN/m$^2$. Twenty horizontal loads with nominal
value $P=10$ kN are applied at the left nodes. The boundary
conditions are determined by the supports: at $A$ the support is a
pin, at $B$ the support is roller. It is assumed $10$\% uncertainty
in the modulus of elasticity $E_k$ of each element ($\mp 5$\% from
the corresponding mean value) and $10$\% uncertainty in the twenty
loads. The goal is to obtain bounds for the axial force ($F_{40}$)
in element $40$.

Exactly this problem is used in \cite{RaoMuMu11} as a benchmark
problem for the applicability, computational efficiency and
scalability of the approach proposed therein for structures with
complex configuration and a large number of interval parameters. The
aim of using this example in the present work is similar: to check
these properties for the newly proposed Algorithm \ref{algo} based
on the new  parameterized solution. In addition, the interval result
obtained by the approach proposed here will be compared to the
results obtained by various other approaches considered in
\cite[Example 2]{RaoMuMu11}.

Table \ref{trussCant} presents intervals for the axial force
$F_{40}$ in element $40$, which are obtained by:
\begin{itemize}
\item the special expanded finite element formulation, proposed in
\cite{RaoMuMu11}, ($\inum{F}_{40}$);
\item the newly proposed parameterized solution and step 2 of Algorithm \ref{algo}, ($F'_{40}(\inum{E},\inum{P})$);
\item the newly proposed parameterized solution and step 3 of Algorithm
\ref{algo}, ($F''_{40}(\inum{E},\inum{P})$).
\end{itemize}

\begin{table}[h]
\begin{center}
\begin{tabular}{lll}
\hline
$\inum{F}_{40}$ by \cite{RaoMuMu11} & $F'_{40}(\inum{E},\inum{P})$ & $F''_{40}(\inum{E},\inum{P})$ \\ %
\hline %
 [60.652, 98.991] \quad & [55.729, 106.03] \quad & [61.595, 98.639] \\ %
\end{tabular}
\end{center}
\caption{Axial force $F_{40}$ (kN) in element $40$ of the cantilever
truss obtained by various approaches.} \label{trussCant}
\end{table}

Interval values for the axial force $F_{40}$, obtained by Pownuk's
``gradient-free" method \cite{Pownuk09} and by the Neumaier's
enclosure $\inum{z}_2(u)$ \cite[Eqn. (4.13)]{NePow}, are presented
in \cite{RaoMuMu11} and can be compared.

It should be mentioned that the coefficient matrices of all interval
parameters in the linear system for the displacements have rank one.
Therefore, there are no exceed interval parameters in the
parameterized solution enclosure for the displacements. The symbolic
expression of $F_{40}(E,P)$ is a quadratic function of the interval
parameter $E_{40}$. Applying step 3 of Algorithm \ref{algo} we prove
numerically that both the lower and the upper bounds of
$F_{40}(E,P)$ are attained at the upper bound of $\inum{E}_{40}$.
Note that this does not mean monotonic dependence of
$\inum{E}_{40}$. Note also that the above proof is very easy
compared to proving monotonic dependence of the displacements on the
interval parameters. Step 3 in Algorithm \ref{algo} costs nothing
compared to step 2 of the algorithm. Thus, we obtain an improvement
$F''_{40}(\inum{E},\inum{P})$ of the bounds
$F'_{40}(\inum{E},\inum{P})$  in Table \ref{trussCant}. Interval
$F''_{40}(\inum{E},\inum{P})$  is sharper than the interval
$\inum{F}_{40}$, obtained by the approach of \cite{RaoMuMu11}, which
shows the efficiency of the newly proposed approach based on the new
parameterized solution. It should be also mentioned that the
interval axial force $\inum{z}_2(u)$, showed in \cite[Table
4]{RaoMuMu11} and obtained by the Neumaier's approach \cite[Eqn.
(4.13)]{NePow}, is the same as the interval
$F'_{40}(\inum{E},\inum{P})$ in Table \ref{trussCant}.

%%%%%%%%%%%%%%%%%%%%%%%%%%%%%%%%%%%%%%%%%%%%%%%%%%%%%%%%%%%%%%%
\section{Conclusion}

We presented a new kind of parameterized solution to interval
parametric linear systems. It is based on optimal rank one
representation of the parameter dependencies. This representation
determines the number of interval parameters in the parameterized
solution, as well as, whether the new parameterized solution will
have better properties than the Kolev-style parameterized solutions.
The new parameterized solution possesses similar computational
complexity as the corresponding numerical method of Theorem
\ref{solution1}. Comparing to some parameterized solutions based on
the weaker condition (\ref{rhoDelta}), the price we pay for a better
solution depends on the difference between a matrix inversion,
$O(s^3)$, when solving equation (\ref{eqY}) of dimension $s$ and the
matrix inversion, $O(n^3)$, related to most of the other methods.
Note that in practical applications, where the new solution is most
efficient, the dependencies have rank one structure and $s=K_1$. The
newly proposed parameterized solution does not require affine
arithmetic as most of the parameterized solutions considered in
\cite{Skalna}. The required rank one representation of the parameter
dependencies is usually inherited by the corresponding model in an
application domain, see, e.g., \cite{NePow}. Otherwise, one can use
the matrix full rank factorization utilities provided by many
general purpose computing environments.

The major advantage of the newly proposed parameterized solution is
for  interval parametric linear systems involving rank one
uncertainty structure. Such systems appear often in various
domain-specific models, cf., \cite{Pop2014CAMWA}. A general
application direction is presented in this article and illustrated
by some numerical examples originated from worst-case analysis of
truss structures in mechanics.

While bounding secondary variables by the approach of
\cite{RaoMuMu11} requires a dedicated IFEM formulation for each
particular problem and the system to be solved is expanded by the
number of derived quantities, the approach based on the new
parameterized solution of primary variables does not depend on the
IFEM formulation, does not require solving an expanded interval
parametric linear system, and provides sharp bounds for the derived
quantities  by a simple interval evaluation. The proposed new
approach could be applied for enclosing secondary variables in
various other domains where the uncertainties have rank one
structure.

\section*{Acknowledgements}
This work is supported by the Grant No BG05M2OP001-1.001-0003,
financed by the Bulgarian Operational Programme ``Science and
Education for Smart Growth"  (2014-2020) and co-financed by the
European Union through the European structural and investment funds.

%%%%%%%%%%%%%%%%%%%%%%%%%%%%%%%%%%%%%%%%%%%%%%%%%%%%%%%%%%%%%%%

\end{document}